\documentclass[10pt]{article}

\usepackage{amsmath}
\usepackage{amsthm}
\usepackage{amssymb}
\usepackage{amsfonts}
\usepackage{graphicx}
\usepackage{verbatim}
\usepackage{mathrsfs}
\usepackage{subfigure}
\usepackage{fancyhdr}
\usepackage{latexsym}
\usepackage{graphicx}
\usepackage{dsfont}
\usepackage{epsf} 
\usepackage{color}

\setlength{\textwidth}{5.5in}
\setlength{\textheight}{8.25in}
\setlength{\topmargin}{-0.4cm}
\setlength{\evensidemargin}{1.5cm}
\setlength{\oddsidemargin}{1.5cm}

\theoremstyle{plain}
\newtheorem{theorem}{Theorem}[section]
\newtheorem{proposition}[theorem]{Proposition}
\newtheorem{lemma}[theorem]{Lemma}

\newtheorem{remark}[theorem]{Remark}
\newtheorem{definition}[theorem]{Definition}

\newcommand{\R}{\mathbb{R}}
\newcommand{\N}{\mathbb{N}}
\newcommand{\C}{\mathcal{C}}
\newcommand{\ds}{\displaystyle}
\DeclareMathOperator{\diverg}{div}
\DeclareMathOperator{\transp}{\!{}^T}

\newcommand{\constC}{\mathcal{K}}
\newcommand{\constdel}{\delta}
\newcommand{\constD}{D}
\newcommand{\eps}{\varepsilon}

\author{Maxime Breden \thanks{CMLA, ENS Cachan, CNRS, Universit\'e Paris-Saclay, 61 avenue du Pr\'esident Wilson, 94235 Cachan, France. {\tt breden@cmla.ens-cachan.fr}}
\and Laurent Desvillettes \thanks{Univ. Paris Diderot, Sorbonne Paris Cit\'e, Institut de Math\'ematiques de Jussieu - Paris Rive Gauche, UMR 7586, CNRS, Sorbonne Universit\'es, UPMC Univ. Paris 06, F-75013, Paris, France. {\tt desvillettes@math.univ-paris-diderot.fr}}
\and Klemens Fellner \thanks{Institute of Mathematics and Scientific Computing, NAWI Graz, University of Graz, Heinrichstr. 36, 8010 Graz, Austria. {\tt klemens.fellner@uni-graz.at}}}

\begin{document}

\title{Smoothness of moments of the solutions of
 discrete coagulation equations with diffusion}

\date{\today}

\maketitle

\begin{abstract}
In this paper, we establish smoothness of moments of the solutions of
discrete coagulation-diffusion systems. As key assumptions, we suppose that
the coagulation coefficients grow at most sub-linearly 
and that the diffusion coefficients  converge towards a strictly positive limit
(those conditions also imply the existence of global weak solutions and the absence of gelation).
\end{abstract}

\begin{center}
Keywords: discrete coagulation systems, Smoluchowski equations, duality arguments, regularity, smoothness, moments estimates

MSC subject class: 35B45, 35B65, 82D60
\end{center}

\section{Introduction}\label{sec:intro}

In this  paper  we consider discrete coagulation systems with spatial diffusion. Coagulation models appear in a wide range of applications ranging from chemistry (e.g. the formation of polymers) over physics (aerosols, raindrops, smoke, sprays), 
astronomy (the formation of galaxies) to biology (haematology, animal grouping), see e.g. the surveys \cite{Dra,LM04,DeFe13} and the references therein. 

Following the pioneering works of Smoluchowski (see \cite {Smo16,Smo17}), we shall denote by $c_i :=c_i(t,x) \in \R_+$ the concentration of polymers or clusters of mass/size $i\in\N^*$ at time $t$ and position $x$. Here, we consider a smooth bounded domain $\Omega$ of $\R^N$ in which the clusters are confined via homogeneous Neumann conditions (in applications, we of course have $N \le 3$).
Moreover, for any positive time $T$, we denote by $\Omega_T$ the set $[0,T] \times\Omega$. 
\medskip

We assume that the concentrations $c_i$ satisfy the following infinite (for all $i\in\N^* := \N \setminus \{0\}$) set of reaction-diffusion equations
with homogeneous Neumann boundary conditions:
\begin{equation}
\label{eq:syst_coag-frag}
\left\{
\begin{aligned}
& \partial_t c_i - d_i \Delta_x c_i = Q_i(c) \quad &\text{on}& \ \Omega_T, \\
& \nabla_x c_i\cdot \nu = 0 \quad &\text{on}& \ [0,T]\times\partial\Omega, \\
& c_i(0,\cdot) = c_i^{in} \quad &\text{on}& \ \Omega,
\end{aligned}
\right.
\end{equation}
where $d_i>0$ are strictly positive diffusion coefficients, $\nu(x)$ denotes the outward unit normal vector at point $x\in\partial\Omega$ and $c_i^{in}$ are given initial data, which are typically assumed nonnegative. 
\par
 The coagulation terms $Q_i(c)$ depend on the whole sequence of concentrations $c=\left(c_i\right)_{i\in \N^*}$ and
can be written as the difference between a gain term ¤$Q_i^+(c)$ and a loss term $Q_i^-(c)$, 
which take the form 
\begin{equation} \label{eq:coag}
Q_i(c) := Q_i^+(c) - Q_i^-(c) = \frac{1}{2}\sum_{j=1}^{i-1}a_{i-j,j}c_{i-j}c_j - \sum_{j=1}^{\infty}a_{i,j}c_ic_j .
\end{equation}
Here, the nonnegative parameters 
 $a_{i,j}$ represent
the coagulation coefficients
of clusters of size $i$ merging with clusters of size $j$, which are symmetric: $a_{i,j}=a_{j,i}$. In this work, we consider the case in which the coagulation 
 coefficients additionally satisfy the following asymptotic behaviour:
\begin{equation}
\label{hyp:exist_LM}
\lim\limits_{j\to\infty} \frac{a_{i,j}}{j} = 0, \quad \forall~i\in\N^*,\qquad\quad 0\le a_{i,j}=a_{j,i}, \quad \forall~i,j\in\N^*.
\end{equation}
The conditions \eqref{hyp:exist_LM} are sufficient to provide the existence of global $L^1$-weak solutions (with
nonnegative concentrations) to system \eqref{eq:syst_coag-frag}-\eqref{eq:coag} (for which the below estimate~\eqref{eq:ineg_masse}  on the mass holds), when suitable nonnegative initial data are considered, see \cite{LauMis02}.

Thanks to the symmetry assumption on the coagulation coefficients, we can write (at a formal level) the following weak formulation of the coagulation operator: For any test-sequence $(\varphi_i)_{i \in \N^*}$, we have
\begin{equation} 
\label{wc}
\sum_{i=1}^{\infty} \varphi_i\,Q_i(c) = \frac{1}{2}\sum_{i=1}^{\infty}\sum_{j=1}^{\infty}a_{i,j}\, c_i\, c_j\,(\varphi_{i+j}-\varphi_i-\varphi_j).
\end{equation}

In the sequel, we shall systematically denote 
for any $k\in\R_+$  by
\begin{equation} \label{nd}
\rho_k(t,x):=\sum_{i=1}^{\infty}i^k\, c_i(t,x)
\end{equation}
the moment of order $k$ of the sequence of concentrations $(c_i)_{i\in \N^*}$ 
and, similarly, the  moment of order $k$ of the initial concentrations by
\begin{equation} \label{ndi}
\rho_k^{in}(x) := \sum_{i=1}^{\infty}i^k\,c_i^{in}(x).
\end{equation}

By taking $\varphi_i = i$ in (\ref{wc}), we see that (still at a formal level) the conservation
of the total mass contained in all clusters/polymers holds, that is, 
\begin{equation} \label{consmasse}
\forall t\ge 0, \qquad \int_{\Omega} \rho_1(t,x)\, dx = \int_{\Omega} \sum_{i=1}^{\infty}i\, c_i(t,x)\,dx = \int_{\Omega} \sum_{i=1}^{\infty}i\,c_i^{in}(x)\,dx =  \int_{\Omega} \rho_1^{in}(x)\, dx.
\end{equation}
It is a well-known phenomenon for coagulation models, called {\em gelation}, that  the formal conservation of the total mass \eqref{consmasse} will not hold for solutions of coagulation models with sufficiently growing coagulation coefficients $a_{i,j}$ (already for space homogeneous models): When approximating the first order moment $\rho_1(t,x)$ as the cut-off limit $ \lim_{K\to\infty} \sum_{i=1}^{\infty} \min\{i,K\} c_i$, then the weak formulation \eqref{wc} for the test-sequence $\varphi_i=\min\{i,K\}$
shows that the map $t\mapsto\sum_{i=1}^{\infty} \min\{i,K\} c_i$ is non-increasing in time, and Fatou's
lemma only implies that the total mass is non-increasing in time (for space homogeneous and space inhomogeneous coagulation with homogeneous Neumann boundary conditions models alike). The conservation law \eqref{consmasse} can become a strict inequality
for solutions of (\ref{eq:syst_coag-frag}) with sufficiently growing coagulation coefficients, but we  still get a natural uniform-in-time bound in $L^{\infty} (\R_+ ; L^1(\Omega))$ of the total mass $\rho_1$, namely
\begin{equation}\label{eq:ineg_masse}
\forall t\ge 0, \qquad \int_{\Omega} \rho_1(t,x)\, dx \leq  \int_{\Omega} \rho_1^{in}(x)\, dx.
\end{equation}
\medskip 

A standard way to prove the existence of weak solutions to system \eqref{eq:syst_coag-frag}-\eqref{eq:coag} satisfying~\eqref{eq:ineg_masse} is to consider a sequence of truncated systems for which we can prove existence of smooth solutions. Then, using some compactness arguments, one extracts a solution of the limiting system \eqref{eq:syst_coag-frag}-\eqref{eq:coag}, again see for instance \cite{LauMis02}. In this work, for any $n\in\N^*$, we define $c^n=\left(c_1^n,\ldots,c_n^n\right)$ as the solution of the truncated problem:
$\forall~ 1\leq i\leq n$,
\begin{align}\label{systtrunc}
\left\{
\begin{aligned}
& \partial_t c_i^n - d_i \Delta_x c_i^n = Q_i^n(c^n) \quad &\text{on}& \ \Omega_T, \\
& \nabla_x c_i^n\cdot \nu = 0 \quad &\text{on}& \ [0,T]\times\partial\Omega, \\
& c_i^n(0,\cdot) = c_i^{in} \quad &\text{on}& \ \Omega,
\end{aligned}
\right.
\end{align}
where
\begin{equation} \label{qtrunc}
Q_i^n(c^n) = \frac{1}{2}\sum_{j=1}^{i-1}a_{i-j,j}c_{i-j}^nc_j^n-\sum_{j=1}^{n-i}a_{i,j}c_i^nc_j^n .
\end{equation}
This is now a finite system of reaction-diffusion equations with finite sums in the r.h.s, for which the existence and uniqueness of nonnegative, global and smooth solutions is classical (see for example Proposition 2.1 and Lemma 2.2 of \cite{Wrz97}, or \cite{DMilan}). Notice that for any sequence $(\varphi_i)_{i \in \N^*}$, we have
\begin{equation} 
\label{wc_truncated}
\sum_{i=1}^{n} \varphi_i\,Q_i^n(c^n) = \frac{1}{2}\sum_{i+j\leq n; i,j \ge 1} a_{i,j}\, c_i^n\, c_j^n\,(\varphi_{i+j}-\varphi_i-\varphi_j),
\end{equation}
so that we get (this time rigorously)
\begin{equation*}
\forall t\ge 0, \qquad \int\limits_{\Omega} \sum_{i=1}^n ic_i^n(t,x)\, dx =  \int\limits_{\Omega} \sum_{i=1}^n ic_i^{in}(x)\, dx .
\end{equation*}
If we manage to extract a limit from $(c_n)$, Fatou's Lemma then
yields~\eqref{eq:ineg_masse} for the limiting concentration.

\medskip

Before proceeding further, let us introduce a precise definition of weak solution, following~\cite{LauMis02}.
\begin{definition}\label{def:sol_faible}
A global weak solution $c=\left(c_i\right)_{i\in\N^*}$ to \eqref{eq:syst_coag-frag}-\eqref{eq:coag} is a sequence of functions $c_i:[0,+\infty)\times\Omega\to[0,+\infty)$ such that, for all $i\in\N^*$ and $T>0$
\begin{itemize}
\item $c_i\in \mathcal{C}\left([0,T];L^1(\Omega)\right)$,
\item $Q^-_i(c)\in L^1(\Omega_T)$,
\item $\sup\limits_{t\geq 0}\int\limits_{\Omega}\rho_1(t,x)dx \leq \int\limits_{\Omega}\rho_1^{in}(x)dx$,
\item $c_i$ is a mild solution to the $i$-th equation in~\eqref{eq:syst_coag-frag}, that is
\begin{equation*}
c_i(t)=e^{d_iA_1t}c_i^{in} + \int_0^t e^{d_iA_1(t-s)}Q_i(c(s))ds,
\end{equation*}
where $Q_i$ is defined by~\eqref{eq:coag}, $A_1$ is the closure in $L^1(\Omega)$ of the unbounded, linear, self-adjoint operator $A$ of $L^2(\Omega)$ defined by
\begin{equation*}
D(A)=\left\{w\in H^2(\Omega),\ \nabla w\cdot \nu=0 \emph{ on }\partial\Omega \right\}, \qquad Aw=\Delta w,
\end{equation*}
and $e^{d_iA_1t}$ is the $\mathcal{C}^0$-semigroup generated by $d_iA_1$ in $L^{1}(\Omega)$.
\end{itemize}
\end{definition}

The following result, which is a direct application of~\cite[Theorem 3]{LauMis02}, states that we can obtain weak solution of~\eqref{eq:syst_coag-frag}-\eqref{eq:coag} from the truncated systems~\eqref{systtrunc}-\eqref{qtrunc}. We also refer to \cite{Wrz97} and \cite{Wrz}.

\begin{proposition}
\label{prop:extraction}
Let $\Omega$ be a smooth bounded domain of $\R^N$. Assume that the coagulation coefficients satisfy~\eqref{hyp:exist_LM} and that  all diffusion coefficients are strictly positive, i.e. $d_i>0$ $\forall~i\in\N^*$. Assume also that the initial concentrations $c_i^{in}\geq 0$ are such that $\rho_1^{in}\in L^1(\Omega)$. For every $n\in\N^*$, let $c^n=(c_1^n,\ldots,c_n^n)$ be the solution of the truncated system of size $n$~\eqref{systtrunc}-\eqref{qtrunc}.
\par
Then, there exists a sequence $c=(c_i)_{i\in\N^*}$ such that, up to extraction
\begin{equation*}
c_i^n\underset{n\to\infty}{\longrightarrow} c_i \quad \text{in }L^1(\Omega_T),\quad \forall~i\in\N^*,\ \forall~T>0,
\end{equation*}
and $c$ is a weak solution to~\eqref{eq:syst_coag-frag}-\eqref{eq:coag} in the sense of Definition \ref{def:sol_faible}.
\end{proposition}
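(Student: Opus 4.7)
The plan is to verify that the hypotheses of \cite[Theorem 3]{LauMis02} are satisfied by the sequence $(c^n)$, after extending each $c^n$ to an infinite sequence by setting $c_i^n \equiv 0$ for $i>n$. The assumption~\eqref{hyp:exist_LM} is precisely the sublinear coagulation growth needed there, the diffusion coefficients are strictly positive by hypothesis, and the initial data satisfy $\rho_1^{in}\in L^1(\Omega)$. The task therefore reduces to establishing the $n$-uniform a priori bounds that feed into the extraction procedure of Laurent--Mischler.

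First, I would derive the mass control: choosing $\varphi_i=i$ in the truncated identity~\eqref{wc_truncated} gives $\sum_{i=1}^n i Q_i^n(c^n)=0$ pointwise, and integrating the truncated system over $\Omega$ (using the Neumann condition to kill the diffusive boundary flux) yields the exact mass conservation already noted in the excerpt, so that $\sum_{i=1}^n i c_i^n$ is bounded in $L^\infty([0,T];L^1(\Omega))$ uniformly in $n$ by $\|\rho_1^{in}\|_{L^1(\Omega)}$. In particular each individual $c_i^n$ inherits a uniform $L^\infty([0,T];L^1(\Omega))$ bound.

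Next, for each fixed $i$, I would obtain a uniform $L^1(\Omega_T)$ bound on $Q_i^n(c^n)$. The loss term is controlled by splitting $\sum_j a_{i,j}c_j^n$ into a finite head (bounded using the mass bound and $\max_{j\le J}a_{i,j}$) plus a tail, where the sublinear assumption $a_{i,j}/j\to 0$ lets us dominate $a_{i,j}c_j^n$ by $\eps\, j c_j^n$ on the tail; the gain term is a finite convolution sum and is handled similarly. Combined with parabolic $L^p$-theory applied to the mild formulation $c_i^n(t)=e^{d_iA_1 t}c_i^{in}+\int_0^t e^{d_iA_1(t-s)}Q_i^n(c^n(s))\,ds$, this produces equi-integrability of $(c_i^n)_n$ in $L^1(\Omega_T)$ via Dunford--Pettis, together with time-equicontinuity in $L^1(\Omega)$ coming from the smoothing of the heat semigroup. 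A Kolmogorov--Riesz / Aubin--Lions type argument then yields strong $L^1(\Omega_T)$ compactness at fixed~$i$.

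Finally, a diagonal extraction in $i\in\N^*$ produces a limit sequence $c=(c_i)_{i\in\N^*}$ with $c_i^n\to c_i$ strongly in $L^1(\Omega_T)$ for every $i$ and $T$. One passes to the limit in the mild formulation termwise: the linear semigroup part is immediate, and for the nonlinear part, the finite number of terms in $Q_i^+(c^n)$ passes to the limit directly, whereas $Q_i^-(c^n)=c_i^n\sum_{j=1}^{n-i}a_{i,j}c_j^n$ is handled by truncating $j$ at some $J$, using strong convergence on the head and the uniform mass bound together with $a_{i,j}/j\to0$ to make the tail arbitrarily small. The $L^\infty([0,T];L^1(\Omega))$ bound on $\rho_1$ and $c_i\in\mathcal{C}([0,T];L^1(\Omega))$ then follow from Fatou and the mild formulation respectively, giving a weak solution in the sense of Definition~\ref{def:sol_faible}. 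The main delicate point is the equi-integrability of the loss term uniformly in $n$, which is exactly where the sublinear hypothesis~\eqref{hyp:exist_LM} is essential.
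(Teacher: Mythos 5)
The paper offers no proof of this proposition at all: it is invoked as ``a direct application of \cite[Theorem 3]{LauMis02}'', so what you have written is a reconstruction of the Lauren\c{c}ot--Mischler compactness argument. Your architecture is the right one and matches that reference: uniform control of the mass, a uniform-in-$n$ bound on $Q_i^n(c^n)$ in $L^1(\Omega_T)$, strong $L^1(\Omega_T)$ compactness of $(c_i^n)_n$ for each fixed $i$ through the mild formulation (the standard tool here is the compactness of $f\mapsto\int_0^te^{d_iA_1(t-s)}f(s)\,ds$ on $L^1(\Omega_T)$, rather than Dunford--Pettis plus Kolmogorov--Riesz, but that is a matter of packaging), diagonal extraction, and a head/tail splitting to pass to the limit in the quadratic terms.

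There is, however, a genuine gap in the step that is supposed to deliver the uniform $L^1(\Omega_T)$ bound on the loss term. You bound $\sum_{j}a_{i,j}c_j^n$ pointwise by $C_J\sum_{j\le J}c_j^n+\eps\,\rho_1^n$, which the mass estimate controls in $L^{\infty}([0,T];L^1(\Omega))$; but $Q_i^{n,-}(c^n)$ is $c_i^n$ \emph{times} this quantity, i.e.\ a product of two factors each of which you only control in $L^{\infty}_tL^1_x$, and such a product need not lie in $L^1(\Omega_T)$ (no $L^2$-type information on the mass is available at this stage under the hypotheses of Proposition~\ref{prop:extraction} --- producing it is exactly the point of the duality lemmas, which are not assumed here). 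The missing ingredient is the zeroth-moment dissipation: taking $\varphi_i\equiv1$ in \eqref{wc_truncated} gives $\sum_{i=1}^{n}Q_i^n(c^n)=-\tfrac12\sum_{i+j\le n}a_{i,j}c_i^nc_j^n$, and integrating the truncated system over $\Omega_T$ yields
\begin{equation*}
\frac12\int_{\Omega_T}\sum_{i+j\le n}a_{i,j}\,c_i^n\,c_j^n\;\le\;\int_{\Omega}\sum_{i=1}^{n}c_i^{in}\;\le\;\int_{\Omega}\rho_1^{in},
\end{equation*}
which bounds the entire quadratic coagulation expression --- hence each $Q_i^{n,\pm}(c^n)$ --- in $L^1(\Omega_T)$ uniformly in $n$. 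The same defect resurfaces at the end of your argument: making the tail $c_i^n\sum_{j>J}a_{i,j}c_j^n$ small in $L^1(\Omega_T)$ uniformly in $n$ again requires this integrated quadratic control (together with an equi-integrability argument for the products), not merely the mass bound and $a_{i,j}/j\to0$. With the $\varphi_i\equiv1$ estimate inserted, your sketch becomes the standard proof of \cite[Theorem 3]{LauMis02}.
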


\medskip 

Our first proposition states that if the diffusion rates of clusters of different sizes are sufficiently close to each others, the natural uniform $L^1$-bound~\eqref{eq:ineg_masse} can be extended to $L^p$ (with $p>1$ depending on the closeness of the diffusion rates). To be more precise about this closeness hypothesis, let us first introduce 

\begin{definition}
\label{def:C_mq}
For $m>0$ and $q\in]1,+\infty[$, we define $\constC_{m,q} > 0$ as the best (i.e. the smallest) constant independent of $T>0$ in the parabolic regularity estimate 
\begin{equation*}
\left(\,\int_{\Omega_T} \left\vert \partial_t v \right\vert^q +m^q \int_{\Omega_T} \left\vert \Delta_x v \right\vert^q\right)^{\frac{1}{q}} \leq \constC_{m,q} \left(\,\int_{\Omega_T} \left\vert f \right\vert^q\right)^{\frac{1}{q}},
\end{equation*}
where $v$ is the unique solution of the heat equation with constant diffusion coefficient $m$, homogeneous Neumann boundary conditions and zero initial data:
\begin{equation*}
\left\{
\begin{aligned}
& \partial_t v - m\Delta_x v = f \quad &\text{on}& \ \Omega_T, \\
& \nabla_x v\cdot \nu = 0 \quad &\text{on}& \ [0,T]\times\partial\Omega, \\
&  v(0,\cdot) = 0 \quad &\text{on}& \ \Omega.
\end{aligned}
\right.
\end{equation*}
\end{definition}
The existence of a such a constant $\constC_{m,q}<\infty$ independent of the
 time $T>0$ is explicitly stated in \cite{Lam87} provided that $\partial\Omega \in \C^{2+\alpha}, \alpha>0$.
\medskip

Next, we present the

\begin{proposition}\label{th:first_moment}
Let $\Omega$ be a smooth bounded domain of $\R^N$ (e.g. $\partial\Omega \in \C^{2+\alpha}, \alpha>0$).  Let $p\in]1,+\infty[$ and assume that the nonnegative initial data $c_i^{in}\ge 0$ have an initial mass $\rho_1^{in}$ which lies in $L^p(\Omega)$.
Assume that the coagulation coefficients satisfy
\eqref{hyp:exist_LM}.
Assume that
\begin{equation}\label{deltaD}
0 < \constdel := \inf_{i\geq 1} d_i, \qquad\text{and}\qquad
\constD := \sup_{i\geq 1} d_i < \infty.
\end{equation}

Then, provided that for the H\"older conjugate $p'$ of $p$
holds the condition 
\begin{equation}
\label{hyp:closeness}
\frac{\constD- \constdel}{\constD+\constdel}\, \constC_{\frac{\constD+\constdel}{2},p'}<1,
\end{equation}
there exists a weak solution of the coagulation system \eqref{eq:syst_coag-frag}-\eqref{eq:coag} such that the mass $\rho_1$ lies in $L^p(\Omega_T)$ for any finite time $T>0$.
\end{proposition}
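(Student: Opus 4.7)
The strategy is a duality argument \`a la Pierre--Schmitt applied to the linear equation satisfied by $\rho_1$. I work first on the truncated system \eqref{systtrunc}-\eqref{qtrunc}, whose smooth nonnegative solution $c^n$ is provided by Proposition~\ref{prop:extraction}. Exact mass conservation \eqref{wc_truncated} with $\varphi_i=i$ gives $\sum_{i=1}^n i\,Q_i^n(c^n)=0$, so introducing
\[
\rho_1^n:=\sum_{i=1}^n i\, c_i^n, \qquad
\bar d^n(t,x):=\frac{\sum_{i=1}^n i\, d_i\, c_i^n(t,x)}{\rho_1^n(t,x)}
\]
(extended by $(\constD+\constdel)/2$ on the null set $\{\rho_1^n=0\}$) and summing the first $n$ equations of \eqref{systtrunc} weighted by $i$, one finds that $\rho_1^n$ satisfies the linear parabolic equation
\[
\partial_t \rho_1^n - \Delta_x\bigl(\bar d^n\,\rho_1^n\bigr)=0 \text{ on }\Omega_T, \qquad \nabla_x(\bar d^n \rho_1^n)\cdot\nu=0,
\]
together with the pointwise bounds $\constdel\le \bar d^n\le\constD$ coming directly from \eqref{deltaD}.

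For the duality step, let $p'$ be the H\"older conjugate of $p$, and take an arbitrary nonnegative $f\in L^{p'}(\Omega_T)$. I consider the backward dual problem
\[
-\partial_t w - \bar d^n \Delta_x w = f \text{ on }\Omega_T, \qquad \nabla_x w\cdot\nu=0, \qquad w(T,\cdot)=0,
\]
which is well posed after a standard mollification of $\bar d^n$ (the estimates below being independent of the regularisation), whose solution is nonnegative by the backward parabolic maximum principle. Multiplying the equation on $\rho_1^n$ by $w$ and integrating by parts, using the Neumann conditions on $\bar d^n\rho_1^n$ and on $w$ together with $w(T,\cdot)=0$, yields the exact duality identity
\[
\int_0^T\!\!\int_\Omega \rho_1^n\, f\, dx\, dt = \int_\Omega \rho_1^{in}\, w(0,\cdot)\, dx.
\]

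To control $w(0,\cdot)$, I rewrite the dual equation with the constant diffusion $(\constD+\constdel)/2$ and a perturbed source,
\[
-\partial_t w - \frac{\constD+\constdel}{2}\,\Delta_x w = f + \Bigl(\bar d^n - \frac{\constD+\constdel}{2}\Bigr)\Delta_x w,
\]
and, after time reversal, apply the maximal regularity estimate of Definition~\ref{def:C_mq} with $m=(\constD+\constdel)/2$ and $q=p'$. Since $\left|\bar d^n-\tfrac{\constD+\constdel}{2}\right|\le \tfrac{\constD-\constdel}{2}$ pointwise, this produces
\[
\frac{\constD+\constdel}{2}\,\|\Delta_x w\|_{L^{p'}(\Omega_T)} \le \constC_{\frac{\constD+\constdel}{2},p'}\biggl(\|f\|_{L^{p'}(\Omega_T)} + \frac{\constD-\constdel}{2}\,\|\Delta_x w\|_{L^{p'}(\Omega_T)}\biggr),
\]
and precisely the closeness hypothesis \eqref{hyp:closeness} allows one to absorb the $\|\Delta_x w\|_{L^{p'}}$ term on the right into the left-hand side, yielding bounds on $\|\Delta_x w\|_{L^{p'}(\Omega_T)}$ and $\|\partial_t w\|_{L^{p'}(\Omega_T)}$ that depend only on $\constdel$, $\constD$, $p$ and $\|f\|_{L^{p'}(\Omega_T)}$. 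Writing $w(0,\cdot)=-\int_0^T\partial_t w(s,\cdot)\, ds$ and using H\"older in time gives $\|w(0,\cdot)\|_{L^{p'}(\Omega)}\le T^{1/p}\|\partial_t w\|_{L^{p'}(\Omega_T)}$. Plugging back into the duality identity and taking the supremum over nonnegative $f$ of unit $L^{p'}(\Omega_T)$-norm (which is enough by the nonnegativity of $\rho_1^n$) gives a bound of the form
\[
\|\rho_1^n\|_{L^p(\Omega_T)} \le C(T,\constdel,\constD,p)\,\|\rho_1^{in}\|_{L^p(\Omega)},
\]
uniformly in $n$. Fatou's lemma, applied to the $L^1(\Omega_T)$ convergence $c_i^n\to c_i$ supplied by Proposition~\ref{prop:extraction}, transfers this bound to the weak solution $\rho_1$.

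The main technical obstacle I expect is not the algebra of the duality argument but the rigorous solvability and the parabolic regularity of the dual problem with the merely bounded measurable coefficient $\bar d^n$: one has to mollify $\bar d^n$, verify that the constant-coefficient estimate of Definition~\ref{def:C_mq} still controls the mollified dual problem through the perturbation identity above, and then remove the regularisation while keeping the final bound independent of the mollification parameter.
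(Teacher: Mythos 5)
Your proposal is correct and follows essentially the same route as the paper: write the closed divergence-form equation for $\rho_1^n$ with coefficient $M_1^n=\bar d^n$ bounded between $\constdel$ and $\constD$, obtain a uniform-in-$n$ bound $\|\rho_1^n\|_{L^p(\Omega_T)}\le C\|\rho_1^{in}\|_{L^p(\Omega)}$ by duality, and pass to the limit with Fatou's lemma. The only difference is that you unfold the duality step (backward dual problem, perturbation around the constant coefficient $\tfrac{\constD+\constdel}{2}$, absorption via \eqref{hyp:closeness}) where the paper simply invokes Proposition~\ref{prop:duality} and Lemma~\ref{lem:parabolic_estimate}, whose proofs are exactly the argument you reproduce.
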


\begin{remark}\label{dualL2eps}
Note that the above estimate was already proven in \cite{CanDevFel10} in the particular case $p=2$, even without assuming~\eqref{hyp:closeness}. 
In fact, the Hilbert space case $p=2$ allows to prove the explicit bound $\constC_{m,2}\leq 1$ (see Lemma~\ref{lem:q=2}), which leads to
\begin{equation}\label{hyp:closenessL2}
\frac{\constD- \constdel}{\constD+\constdel}\constC_{\frac{\constD+\constdel}{2},2}
\leq \frac{\constD- \constdel}{\constD+\constdel} 
<1,
\end{equation}
and hypothesis~\eqref{hyp:closeness} is automatically satisfied for $p=2$ for all $0<\constdel\le\constD<\infty$ and $T>0$
(hence its absence in \cite{CanDevFel10}). 

Note that this global $L^2$-bound together with assumptions \eqref{hyp:exist_LM} also ensures that no gelation can occur, so that the conservation law \eqref{consmasse} rigorously holds for any weak solution, see \cite{CanDevFel10}.

Moreover, the strict inequality in \eqref{hyp:closenessL2} has been further exploited in \cite{CanDevFel13} by proving a continuous upper bound of the best constant $\constC_{m,p'}$ on $p'\le2$. Therefore, for all $0<\constdel\le\constD<\infty$,
 there exists 
a sufficiently small $0<\eps=\eps(\delta,D)\ll1$ such that \eqref{hyp:closenessL2} can be slightly improved to 
$$
\frac{\constD- \constdel}{\constD+\constdel}\, \constC_{\frac{\constD+\constdel}{2},2-O(\eps)}<1,
$$ 
and this allows to prove a correspondingly improved a priori estimate in $L^{2+\eps}(\Omega_T)$.
\end{remark}
\medskip

Proposition \ref{th:first_moment} can be improved in the case when the diffusion coefficients $\left(d_i\right)_{i\in \N^*}$ constitute a sequence converging towards a strictly positive limit. Note that such an assumption is not so far from the assumption that the sequence $\left(d_i\right)_{i\in \N^*}$ is bounded above and below (by a strictly positive constant), which
 is used in Proposition \ref{th:first_moment} 
(or also in \cite{CanDevFel10}),
 since one expects on physical grounds that the sequence  $\left(d_i\right)_{i\in \N^*}$ is decreasing; that is, that larger clusters diffuse less. Under this assumption and provided that the coagulation coefficients are strictly sublinear (see the precise assumption in Proposition \ref{th:all_moments} below)
 we can show that $L^p$ norms of moments $\rho_k$ are propagated for any $k \in \N^*$, $p \in ]1, \infty[$. 

\begin{proposition}\label{th:all_moments}
Let $\Omega$ be a smooth bounded domain of $\R^N$.
Assume that the coagulation coefficients satisfy for a constant $C>0$ and all $i,j\in\N^*$
\begin{equation}\label{sub}
0\le a_{i,j}= a_{j,i}\leq C\, (i^{\gamma}+j^{\gamma}), \quad \text{for some  } \gamma\in[0,1[,  
\end{equation}
 and that $\left(d_i\right)_{i \in \N}$ is a sequence of strictly positive real numbers which converges toward a strictly positive limit.
 \par
 Assume that (for some $k\in\N^*$)
 the initial moment $\rho_k^{in}$ lies in $L^p(\Omega)$ for 
all $p<+\infty$ and that (for all $i\in \N^*$) each initial concentration $c_i^{in} \ge 0$ lies in $L^{\infty}(\Omega)$.
\par
 Then, there exists a global weak nonnegative solution to~\eqref{eq:syst_coag-frag}-\eqref{eq:coag} for which the moment $\rho_k$ lie in $L^p(\Omega_T)$ for all $p<+\infty$ and all finite time  $T>0$.
\end{proposition}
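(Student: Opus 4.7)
\medskip

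\textbf{Plan.} The goal is to upgrade the duality estimate underlying Proposition~\ref{th:first_moment} by exploiting the convergence $d_i \to d_\infty > 0$: the perturbation $d_i - d_\infty$ splits into a contribution supported on finitely many modes $i \le N$ (large in amplitude but easy to control individually) plus a tail $i > N$ of amplitude at most $\eta$, which can be made arbitrarily small by taking $N$ large. Working at the truncated level \eqref{systtrunc}--\eqref{qtrunc}, I rewrite $\sum_{i=1}^n i^k d_i c_i^n = d_\infty\,\rho_k^n + w^{\mathrm{head}} + w^{\mathrm{tail}}$ with $w^{\mathrm{head}} := \sum_{i \le N} i^k (d_i - d_\infty) c_i^n$ and $w^{\mathrm{tail}} := \sum_{N < i \le n} i^k (d_i - d_\infty) c_i^n$, so that the moment $\rho_k^n := \sum_{i=1}^n i^k c_i^n$ satisfies
\begin{equation*}
\partial_t \rho_k^n - d_\infty \Delta_x \rho_k^n = S_k^n + \Delta_x w^{\mathrm{head}} + \Delta_x w^{\mathrm{tail}},
\end{equation*}
where $S_k^n := \sum_{i=1}^n i^k Q_i^n(c^n) \ge 0$ and $|w^{\mathrm{tail}}| \le \eta\,\rho_k^n$ provided $N$ is large enough that $|d_i - d_\infty|\le\eta$ for $i>N$.

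\medskip

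\textbf{Preliminary step: uniform $L^\infty$ bounds on low modes.} I first prove by induction on $i$ that each $c_i^n$ is bounded in $L^\infty(\Omega_T)$ uniformly in $n$. For $i=1$, the inequality $\partial_t c_1^n - d_1 \Delta_x c_1^n \le 0$ (the loss term being nonnegative and the gain term vanishing) together with the maximum principle gives $c_1^n \le \|c_1^{in}\|_{L^\infty(\Omega)}$. Given uniform bounds on $c_1^n,\ldots,c_{i-1}^n$, the gain term $Q_i^{n,+}(c^n)$ is bounded in $L^\infty(\Omega_T)$ uniformly in $n$ (as a finite sum of bounded quantities), and the maximum principle applied to the equation for $c_i^n$ closes the step. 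In particular $\|w^{\mathrm{head}}\|_{L^\infty(\Omega_T)}$ is bounded uniformly in $n$ for any fixed $N$.

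\medskip

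\textbf{Duality estimate.} For any $\varphi \in L^{p'}(\Omega_T)$, let $\psi$ solve the backward dual problem $-\partial_t \psi - d_\infty \Delta_x \psi = \varphi$ on $\Omega_T$, with $\psi(T,\cdot) = 0$ and homogeneous Neumann boundary conditions. Multiplying the moment equation by $\psi$, integrating over $\Omega_T$, and performing two integrations by parts (using the boundary conditions) yields
\begin{equation*}
\int_{\Omega_T} \rho_k^n \varphi = \int_\Omega \rho_k^{in}\,\psi(0,\cdot) + \int_{\Omega_T} S_k^n\,\psi + \int_{\Omega_T} (w^{\mathrm{head}} + w^{\mathrm{tail}})\,\Delta_x \psi.
\end{equation*}
Controlling $\|\Delta_x \psi\|_{L^{p'}(\Omega_T)}$ via Definition~\ref{def:C_mq}, and $\|\psi\|_{L^{p'}(\Omega_T)}$ and $\|\psi(0,\cdot)\|_{L^{p'}(\Omega)}$ through the Duhamel formula and the $L^{p'}$-contractivity of the heat semigroup, all in terms of $\|\varphi\|_{L^{p'}(\Omega_T)}$, and then taking the supremum over unit-norm $\varphi$, I obtain
\begin{equation*}
\|\rho_k^n\|_{L^p(\Omega_T)} \le C_T\,\bigl(\|\rho_k^{in}\|_{L^p(\Omega)} + \|S_k^n\|_{L^p(\Omega_T)} + \|w^{\mathrm{head}}\|_{L^p(\Omega_T)}\bigr) + C_T\,\eta\,\|\rho_k^n\|_{L^p(\Omega_T)},
\end{equation*}
and choosing $\eta$ small enough absorbs the last term into the left-hand side.

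\medskip

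\textbf{Closing the bootstrap and limit.} For $k=1$, $S_1^n \equiv 0$ by \eqref{wc_truncated} with $\varphi_i = i$, so the above inequality directly provides uniform-in-$n$ bounds on $\rho_1^n$ in $L^p(\Omega_T)$ for every $p<+\infty$. For $k \ge 2$, the elementary inequality $(i+j)^k - i^k - j^k \le C_k(ij^{k-1} + i^{k-1}j)$ combined with \eqref{sub} and the H\"older interpolation $\rho_a \le (\rho_k)^{(a-1)/(k-1)}(\rho_1)^{(k-a)/(k-1)}$ (valid for $1 \le a \le k$) yields the pointwise bound $S_k^n \le C_k\,(\rho_k^n)^{1-\theta}(\rho_1^n)^{1+\theta}$ with $\theta := (1-\gamma)/(k-1) > 0$. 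H\"older in $(t,x)$ then gives $\|S_k^n\|_{L^p(\Omega_T)} \le C\,\|\rho_k^n\|_{L^p}^{1-\theta}\,\|\rho_1^n\|_{L^{p(1+\theta)/\theta}}^{1+\theta}$, and Young's inequality absorbs the $\|\rho_k^n\|_{L^p}$ factor at the cost of a high power of $\|\rho_1^n\|_{L^q}$, which is finite by the $k=1$ case already treated. The resulting uniform-in-$n$ bound transfers to the weak solution produced by Proposition~\ref{prop:extraction} via lower semicontinuity of the $L^p$ norm. The main obstacle is precisely the head perturbation $w^{\mathrm{head}}$: since $d_i - d_\infty$ is of order one for small $i$, it cannot be absorbed by the smallness of $\eta$ and must be controlled individually, which is what the preliminary $L^\infty$ induction on the low modes accomplishes, crucially relying on the assumption $c_i^{in} \in L^\infty(\Omega)$.
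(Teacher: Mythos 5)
Your argument is correct and rests on the same three pillars as the paper's proof (duality, uniform $L^\infty$ bounds on the low modes obtained by induction and the maximum principle, and the sublinearity \eqref{sub} turned into a sub-critical power of $\rho_k$ via interpolation and Young's inequality, applied at the truncated level where $\rho_k^n\in L^p(\Omega_T)$ is known a priori), but it is organized differently in two substantive ways. First, the paper keeps the diffusion in divergence form with the variable coefficient $M_k^I$ and works with the \emph{tail} moments $\rho_k^I=\sum_{i\ge I}i^kc_i$, for which the closeness condition \eqref{hyp:closeness} holds once $I$ is large; the price is a commutator term $\sum_{i,j} a_{i,j}c_ic_j\,i^k(\mathds{1}_{i+j\geq I}-\mathds{1}_{i\geq I})$ created by truncating the moment inside the coagulation operator, which is then controlled by the $L^\infty$ bounds. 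You instead keep the full moment $\rho_k^n$ (so no such commutator appears) and truncate the \emph{diffusion flux}, writing $\sum_i i^kd_ic_i^n=d_\infty\rho_k^n+w^{\mathrm{head}}+w^{\mathrm{tail}}$ and running the duality against the constant-coefficient dual problem; this re-derives inline the perturbative mechanism behind Lemma~\ref{lem:parabolic_estimate} and Propositions~\ref{prop:duality_rhs}--\ref{prop:duality_eps}. The absorption of the tail term works because the relevant constant is $\constC_{d_\infty,p'}/d_\infty$, which is $T$-independent by Definition~\ref{def:C_mq} --- worth stating explicitly, since your notation $C_T\,\eta$ suggests otherwise (for fixed $T$ this would not matter, but it is cleaner to note that $\eta$ can be chosen depending only on $d_\infty$ and $p'$). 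Second, where the paper inducts on $k$, bounding the cross terms by $\sum_{l}\binom{k}{l}\rho_{l+1}\rho_{k-l}$ and invoking the induction hypothesis for $\rho_l$, $l\le k-1$, you interpolate every intermediate moment between $\rho_1$ and $\rho_k$, so that a single Young absorption closes the estimate using only the $k=1$ case; this shortens the bootstrap, at the cost of needing $\rho_1^n$ uniformly bounded in $L^q$ for the large exponent $q=p(1+\theta)/\theta$, which is indeed available since your $k=1$ estimate holds for every finite exponent. Both routes conclude identically by Fatou's lemma on the extracted weak solution.
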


\begin{remark}
Notice that hypothesis~\eqref{sub} on the coagulation coefficients implies the assumption \eqref{hyp:exist_LM}, which in return yields existence of global weak solutions. 
 \par
In the existing literature, sublinear assumptions on the coagulation coefficients are often found under the form:
\begin{equation}\label{sub_bis}
0\le a_{i,j}= a_{j,i}\leq \tilde C\, (i^{\alpha}j^{\beta}+i^{\beta}j^{\alpha}), \quad \emph{for some }\alpha,\beta\in[0,1[ \emph{ with } \alpha+\beta<1. 
\end{equation} 
Our motive for using assumption~\eqref{sub} rather than assumption~\eqref{sub_bis} is mainly that it allows for slightly shorter computations, without any loss of generality since \eqref{sub_bis} implies \eqref{sub} with $\gamma=\alpha+\beta$.
 \par
Finally, we point out that Proposition~\ref{th:all_moments} could be extended to the case where a finite number of diffusion coefficients $d_i$ are equal to $0$ (see Remark~\ref{remark:d_i=0}).
\end{remark}

\medskip

Results in the same spirit about propagation of moments have been obtained recently in \cite{Rez10} and \cite{Rez14}, where the system~\eqref{eq:syst_coag-frag}-\eqref{eq:coag} and its continuous counterpart are studied on the whole space $\R^N$. Assuming~\eqref{deltaD} and a finite total increase of variation for $(d_i)$, together with a control on the growth of the coagulation coefficients (also involving the diffusion rates) such as
\begin{equation*}
\frac{a_{i,j}}{(i+j)(d_i+d_j)}\xrightarrow[i+j\to+\infty]{}0, 
\end{equation*}
and the finiteness of some of the initial moments (in different norms), bounds 
are obtained which look like
\begin{equation*}
\left\Vert \rho_k(t,\cdot)\right\Vert_{L^p(\Omega)} \leq \left\Vert \rho_k^{in}\right\Vert_{L^p(\Omega)} + Ck^{-l},
\end{equation*}
where $l$ depends on the degree of the initial moments assumed to be finite. 
\medskip

The statement of our result is therefore close to that of \cite{Rez14} (our requirement on the diffusion rate is however more stringent), but the proof is 
completly different, so that the exact conditions required on initial data are also different.
 Note that the limit case $a_{i,j} = i + j$ is still open (absence of gelation for this
 coagulation coefficient is conjectured in general, but is proven only when there is no diffusion).
 \medskip
 
 Although in the present work, $L^p$ estimates for moments are only shown for $p < \infty$ (whereas $p= \infty$ can be obtained in \cite{Rez14}), the use of parabolic inequalities for the heat equation enables to recover this case (and also higher order derivatives).  
   \medskip
   
Indeed, the estimates obtained in Proposition~\ref{th:all_moments} can be improved if the initial data are assumed to be smooth enough. This leads to our main Theorem, namely:
 
\begin{theorem}\label{th:all_moments_w}
Let $\Omega$ be a smooth bounded domain of $\R^N$. 
Assume that the coagulation coefficients satisfy (\ref{sub}) and that $\left(d_i\right)_{i \in \N}$ is a sequence of strictly positive real numbers which converges toward a strictly positive limit. 
\par
Assume that the initial data $c_i^{in} \ge 0$ are of class $\C^{\infty}(\overline \Omega)$, compatible with the boundary conditions, and that for all $k \in \N^*$ the initial moments $\rho_k^{in}$ are of class $\C^{\infty}(\overline \Omega)$.
\par
 Then, there exist a unique smooth solution to~\eqref{eq:syst_coag-frag}-\eqref{eq:coag} such that each $(c_i)$ is nonnegative, of class $\C^{\infty}(\overline\Omega_T)$ for any finite time $T>0$, and such that the moments $\rho_k$ are also of class $\C^{\infty}(\overline\Omega_T)$, for any $k\in\N^*$.
\end{theorem}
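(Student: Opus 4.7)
The proof is a three-stage bootstrap built upon Proposition \ref{th:all_moments}.

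First, I would apply Proposition \ref{th:all_moments}: since the initial data are smooth, every initial moment $\rho_k^{in}$ lies in $L^p(\Omega)$ for all $p<\infty$, so this produces a weak solution with $\rho_k \in L^p(\Omega_T)$ for every $k \in \N^*$ and every $p < \infty$. Next, I would upgrade each individual $c_i$ to $\C^\infty(\overline\Omega_T)$ by a parabolic bootstrap. For fixed $i$, the sublinear assumption \eqref{sub} yields $Q_i^+(c) \leq C_i \rho_0^2$ (a finite sum) and $Q_i^-(c) = c_i \sum_j a_{i,j} c_j \leq C\, c_i (i^\gamma \rho_0 + \rho_\gamma)$, so $Q_i(c) \in L^p(\Omega_T)$ for all $p$. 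Maximal parabolic $L^p$-regularity applied to $\partial_t c_i - d_i \Delta c_i = Q_i$ with smooth, compatible $c_i^{in}$ gives $c_i \in W^{2,1}_p$; Sobolev embedding then provides H\"older continuity, and a classical parabolic Schauder iteration---carried out in parallel in $i$, since the $c_j$'s appearing in $Q_i$ gain regularity simultaneously---promotes each $c_i$ to $\C^\infty(\overline\Omega_T)$.

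The principal difficulty lies in the third stage: passing from smoothness of each $c_i$ to smoothness of the infinite sum $\rho_k$. The key technical ingredient is a summability estimate exploiting positivity: since $c_i \geq 0$, the pointwise inequality $i^{k+1+\eta} c_i \leq \rho_{k+1+\eta}$ (for any $\eta > 0$) combined with the first stage yields
\begin{equation*}
\sum_{i=1}^\infty i^k \|c_i\|_{L^p(\Omega_T)} \leq \|\rho_{k+1+\eta}\|_{L^p(\Omega_T)} \sum_{i=1}^\infty i^{-1-\eta} < \infty.
\end{equation*}
The analogous estimate propagates to derivatives by induction on the order of the space-time multi-index $\alpha$: maximal parabolic regularity (with constants uniform in $d_i \in [\constdel, \constD]$) bounds $\|\partial^\alpha c_i\|_{L^p}$ by $\|Q_i\|_{W^{|\alpha|-2,p}}$ plus initial-data contributions, and expanding $Q_i$ via \eqref{eq:coag}, applying Leibniz and H\"older, and using \eqref{sub} controls this Sobolev norm by a bilinear combination of lower-order derivatives of the $c_j$'s whose weighted sum is finite by the induction hypothesis. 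Minkowski's inequality then gives $\rho_k \in W^{m,p}(\Omega_T)$ for every $m$ and $p$, hence $\rho_k \in \C^\infty(\overline\Omega_T)$ by Sobolev embedding. Uniqueness in the smooth class follows from a standard contraction argument in a moment-weighted norm using the sublinearity of the coagulation coefficients.
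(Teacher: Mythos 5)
Your proposal is correct and follows essentially the same route as the paper: starting from the $L^p$ moment bounds of Proposition~\ref{th:all_moments}, one bootstraps weighted $W^{s,p}(\Omega_T)$ bounds on the $c_i$ by induction on the derivative order, using maximal parabolic regularity (uniform in $i$ since $\constdel\le d_i\le\constD$) together with a Leibniz/H\"older/convolution estimate on $i^kQ_i(c)$, and then sums over $i$ by borrowing powers of $i$ against a convergent series before concluding via Sobolev embedding. The only cosmetic differences are that your separate Schauder stage for each individual $c_i$ is subsumed by the uniform-in-$i$ induction, and that the paper obtains uniqueness by invoking the Hammond--Rezakhanlou criterion ($\rho_2\in L^\infty(\Omega_T)$) rather than redoing the weighted contraction argument.
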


\begin{remark}
The $\C^{\infty}$ regularity down to time $0$ requires of course the $\C^{\infty}$ hypothesis on the initial data. However, it can be seen in the various steps of the proof (see Section~\ref{sec:max_reg}) that propagation of regularity
 in intermediate Sobolev spaces holds
  under suitable (less stringent) assumptions on the initial data. 
 \par
 Since each $c_i$ is solution of a heat equation subject to a r.h.s. that can be controlled once all moments are bounded in $L^p(\Omega_T)$, $p<+\infty$, we can in fact show the creation of regularity for strictly positive times. For example, under the assumption 
that $\rho_k^{in}\in L^p(\Omega)$ for all $p<+\infty$ and all $k\in\mathbb{N}^*$, we can prove that the concentrations $c_i$  are of class $C^{\infty}(]0,T] \times \bar{\Omega})$.
 \par
 Also, as will be made clear in Section~\ref{sec:max_reg},
  $\C^{\infty}$ regularity is not needed to ensure uniqueness. As shown in \cite{HamRez07}, uniqueness holds as soon as $\rho_2\in L^{\infty}$, so that starting from initial data leading to an estimate for 
$\rho_2$  in a Sobolev space embedded in $L^{\infty}$, uniqueness can already
be obtained.
 \par
Finally, we point out that assumption (\ref{sub}) is not far from optimal, since it is known that gelation can occur 
as soon as $a_{i,j} =  i^{\alpha}j^{\beta}+i^{\beta}j^{\alpha}$ with $\alpha+\beta>1$ 
(see \cite{EscMisPer02}) and gelation is not compatible with the conclusion of Proposition~\ref{th:all_moments} or Theorem~\ref{th:all_moments_w}.  
\end{remark}
  
 \medskip
 
 
 
Our paper is organized as follows. In Section~\ref{sec:duality}, we recall some
 lemmas existing in the literature and called duality lemmas. We also introduce modified versions of those lemmas, that are later used in Section~\ref{sec:prop_Lp} to prove the propagation of moments in $L^p(\Omega_T)$, $p<+\infty$ (Propositions~\ref{th:first_moment} and \ref{th:all_moments}). In Section~\ref{sec:max_reg}, we extend these results  to prove $\C^{\infty}$ regularity for the concentrations and the moments (Theorem~\ref{th:all_moments_w}). Finally, a short Appendix is devoted to technical lemmas which are useful to make the proof of some duality lemmas rigorous.

\section{Duality estimates}
\label{sec:duality}

We start by recalling some {\it {a priori}} estimates based on duality arguments from \cite{CanDevFel13}. These estimates are key ingredients of the present work. In this section, functions said to be weak solutions ought to be understood as solutions of the equation obtained by multiplying by a test function and integrating by parts. Remember also that $\constC_{m,q}$ is defined in Definition \ref{def:C_mq}.
\medskip

The first statement recalls \cite[Lemma~2.2]{CanDevFel13}.

\begin{lemma}\label{lem:parabolic_estimate}
Let $\Omega$ be a smooth bounded subset of $\R^N$ and
consider a function $M:= M(t,x): \Omega_T\to\R_+$ satisfying $a\leq M \leq b$ for some $a,b>0$. For any $q\in]1,+\infty[$, if 
\begin{equation}
\label{hyp:closeness_bis}
\frac{b-a}{b+a}\,\constC_{\frac{a+b}{2},q}<1,
\end{equation}
then, there exist constants $C_0>0$ and $C>0$ (depending on $\Omega, a,b,q,T$) such that for any $f\in L^q(\Omega)$, the (unique, weak) solution $v$ of the backward parabolic system with $L^{\infty}$ coefficient $M:=M(t,x)$,
\begin{equation*}
\left\{
\begin{aligned}
& \partial_t v + M\Delta_x v = f \quad &\text{on}& \ \Omega_T, \\
& \nabla_x v\cdot \nu = 0 \quad &\text{on}& \ [0,T]\times\partial\Omega, \\
&  v(T,\cdot) = 0 \quad &\text{on}& \ \Omega,
\end{aligned}
\right.
\end{equation*}
satisfies  $\left\Vert v\right\Vert_{L^q(\Omega_T)} \leq C \left\Vert f\right\Vert_{L^q(\Omega_T)}$ and $\left\Vert v(0, \cdot)\right\Vert_{L^q(\Omega)} \leq C_0 \left\Vert f\right\Vert_{L^q(\Omega_T)}$.
\end{lemma}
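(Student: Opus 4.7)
The plan is to reduce the variable-coefficient backward parabolic equation to an application of the constant-coefficient maximal regularity estimate that defines $\constC_{m,q}$, with the $M$-dependent part treated as a perturbation that the closeness hypothesis \eqref{hyp:closeness_bis} allows us to absorb.

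Concretely, set $m:=\frac{a+b}{2}$, so that $|M-m|\leq \frac{b-a}{2}$ a.e.\ on $\Omega_T$. The equation rewrites as
\begin{equation*}
\partial_t v + m\Delta_x v = f - (M-m)\Delta_x v, \qquad v(T,\cdot)=0,
\end{equation*}
and the time-reversal $w(t,x):=v(T-t,x)$ turns this into a forward heat equation with constant diffusion $m$, zero initial data, and right-hand side in $L^q(\Omega_T)$. Applying Definition \ref{def:C_mq} to $w$ and undoing the reversal gives
\begin{equation*}
m\,\|\Delta_x v\|_{L^q(\Omega_T)} \leq \constC_{m,q}\Bigl(\|f\|_{L^q(\Omega_T)} + \tfrac{b-a}{2}\|\Delta_x v\|_{L^q(\Omega_T)}\Bigr).
\end{equation*}
Since \eqref{hyp:closeness_bis} is exactly the statement that $\constC_{m,q}\,\frac{b-a}{2m}<1$, the last term can be absorbed on the left, producing $\|\Delta_x v\|_{L^q(\Omega_T)}\leq C_1\|f\|_{L^q(\Omega_T)}$ with $C_1$ depending only on $a,b,q,\Omega$. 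The equation itself then controls the time derivative, $\|\partial_t v\|_{L^q(\Omega_T)} \leq \|f\|_{L^q(\Omega_T)} + b\,\|\Delta_x v\|_{L^q(\Omega_T)}\leq C_2\|f\|_{L^q(\Omega_T)}$.

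To pass from $\partial_t v$ to $v$ itself, I would use the terminal condition: $v(t,x)=-\int_t^T \partial_s v(s,x)\,ds$, so that Hölder in time in the variable $s$ yields $\|v(0,\cdot)\|_{L^q(\Omega)}\leq T^{1-1/q}\|\partial_t v\|_{L^q(\Omega_T)}$ and $\|v\|_{L^q(\Omega_T)}\leq T\|\partial_t v\|_{L^q(\Omega_T)}$, giving the two claimed inequalities with constants $C,C_0$ depending on $T$ (in addition to $\Omega,a,b,q$).

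The genuine obstacle is rigor: the $L^q$ estimate for $\Delta_x v$ has been used to control a term that itself involves $\Delta_x v$, so one must know a priori that $\Delta_x v\in L^q(\Omega_T)$ and, more basically, that a (unique) weak solution to the backward problem with merely $L^\infty$ coefficient $M$ actually exists. I would circumvent this by regularizing $M$ by a smooth $M_\eps$ (e.g.\ by mollification, still satisfying $a\leq M_\eps\leq b$), for which classical parabolic theory supplies a smooth solution $v_\eps$; running the argument above produces uniform-in-$\eps$ bounds on $\|v_\eps\|_{L^q(\Omega_T)}$, $\|\partial_t v_\eps\|_{L^q(\Omega_T)}$ and $\|\Delta_x v_\eps\|_{L^q(\Omega_T)}$, after which weak compactness in $L^q$ together with the technical lemmas announced in the appendix lets us pass to the limit and identify the limit as a weak solution satisfying the asserted estimates.
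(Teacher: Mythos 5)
Your argument is correct and is essentially the paper's own: the lemma's estimates are quoted from the reference [CanDevFel13, Lemma~2.2], whose proof is precisely your perturbation around $m=\frac{a+b}{2}$ with the term $(M-m)\Delta_x v$ absorbed via the closeness hypothesis, followed by the recovery of $\Vert v\Vert_{L^q(\Omega_T)}$ and $\Vert v(0,\cdot)\Vert_{L^q(\Omega)}$ from $\partial_t v$ and the terminal condition. The only cosmetic difference is in how rigor is restored: where you mollify $M$ and pass to the limit by weak compactness, the paper (Proposition~\ref{prop:exist_para_disc} in the Appendix) runs the same inequality as a contraction estimate for a Banach fixed-point map on $Z^q$, which yields existence, uniqueness and the a priori bound in one stroke.
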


\begin{remark}
The bound on $\left\Vert v\right\Vert_{L^q(\Omega_T)}$ is not explicitly mentioned in Lemma~2.2 of \cite{CanDevFel13}, but is a direct consequence of its proof, in particular of the estimates $\left\Vert \Delta_x v\right\Vert_{L^q(\Omega_T)} \le C_1 \left\Vert f\right\Vert_{L^q(\Omega_T)}$ and $\left\Vert \partial_t v\right\Vert_{L^q(\Omega_T)} \le C_1 \left\Vert f\right\Vert_{L^q(\Omega_T)}$, which are explicitly mentioned there.
\end{remark}

\begin{remark}
The fact that the above mentioned function $v$ exists (and is unique) is (in particular for $q<2$) not 
obvious because $M$ is not assumed to be continuous (or at least VMO). 
In the Appendix (Proposition~\ref{prop:exist_para_disc}), we give a proof of the existence and uniqueness of $v$ for the sake of completeness.
\end{remark}

Lemma~\ref{lem:parabolic_estimate} is used to prove the following duality lemma, which is Proposition~1.1 of \cite{CanDevFel13}.
\begin{proposition}\label{prop:duality}
Let $\Omega$ be a smooth bounded subset of $\R^N$ and
consider a function $M:=M(t,x) :\Omega_T\to\R_+$
 satisfying $a\leq M \leq b$ for some $a,b>0$. For any $p\in]1,+\infty[$, if 
\begin{equation*}
\frac{b-a}{b+a}\,\constC_{\frac{a+b}{2},p'}<1,
\end{equation*}
then, there exists a constant $C>0$ (depending on $\Omega$, $a,b,p,T$) such that for any $u_0\in L^p(\Omega)$, any weak solution $u$ of the parabolic system (in divergence form)
\begin{equation*}
\left\{
\begin{aligned}
& \partial_t u - \Delta_x \left(Mu\right) = 0 \quad &\text{on}& \ \Omega_T, \\
& \nabla_x u\cdot \nu = 0 \quad &\text{on}& \ [0,T]\times\partial\Omega, \\
&  u(0,\cdot) = u_0 \quad &\text{on}& \ \Omega,
\end{aligned}
\right.
\end{equation*}
satisfies  $\left\Vert u\right\Vert_{L^p(\Omega_T)} \leq C \left\Vert u_0\right\Vert_{L^p(\Omega)}$.
\end{proposition}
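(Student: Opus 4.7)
The plan is to exploit $L^p$--$L^{p'}$ duality: by the standard Riesz identification,
\begin{equation*}
\|u\|_{L^p(\Omega_T)} \;=\; \sup\Bigl\{\,\Bigl|\int_0^T\!\!\int_\Omega u\,f\,dx\,dt\Bigr| \,:\, f\in L^{p'}(\Omega_T),\ \|f\|_{L^{p'}(\Omega_T)}\le 1\,\Bigr\},
\end{equation*}
so it suffices to bound $\int u f$ by $\|u_0\|_{L^p(\Omega)}$ for every such $f$. The dual ``test'' function will be the solution of the backward problem that Lemma~\ref{lem:parabolic_estimate} has been designed to handle.

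First I would fix $f\in L^{p'}(\Omega_T)$ with $\|f\|_{L^{p'}(\Omega_T)}\le 1$ and apply Lemma~\ref{lem:parabolic_estimate} with $q=p'$. Note that the closeness assumption (\ref{hyp:closeness_bis}) with $q=p'$ is exactly the hypothesis of the present Proposition, so the lemma produces a (unique, weak) solution $v$ of
\begin{equation*}
\partial_t v + M\Delta_x v = f,\qquad v(T,\cdot)=0,\qquad \nabla_x v\cdot\nu = 0,
\end{equation*}
with $\|v(0,\cdot)\|_{L^{p'}(\Omega)}\le C_0\,\|f\|_{L^{p'}(\Omega_T)}$.

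Next I would use $v$ as a test function for the forward equation $\partial_t u - \Delta_x(Mu)=0$ and integrate over $\Omega_T$. Integrating by parts in $t$ (using $v(T,\cdot)=0$ and $u(0,\cdot)=u_0$) and twice in $x$ (the Neumann conditions on both $u$ and $v$ together with the Neumann character of the divergence-form operator make the boundary integrals vanish), the remaining interior terms should combine to $u(\partial_t v + M\Delta_x v)=uf$, yielding
\begin{equation*}
\int_0^T\!\!\int_\Omega u\,f\,dx\,dt \;=\; -\int_\Omega u_0\, v(0,\cdot)\,dx.
\end{equation*}
Hölder's inequality and the bound from Lemma~\ref{lem:parabolic_estimate} then give
\begin{equation*}
\Bigl|\int_0^T\!\!\int_\Omega u\,f\Bigr| \;\le\; \|u_0\|_{L^p(\Omega)}\,\|v(0,\cdot)\|_{L^{p'}(\Omega)} \;\le\; C_0\,\|u_0\|_{L^p(\Omega)}\,\|f\|_{L^{p'}(\Omega_T)},
\end{equation*}
and taking the supremum over such $f$ concludes with $C=C_0$.

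The main obstacle is making the integration by parts rigorous. Since $M$ is only bounded and measurable, $v$ lies in a parabolic Sobolev class of moderate regularity (as produced by Lemma~\ref{lem:parabolic_estimate} and the existence/uniqueness result alluded to in the Appendix), and $u$ is a weak solution with comparably low regularity, so the pairing above cannot be carried out naively. The standard cure, which I would follow, is to approximate $M$ by smooth coefficients $M_\varepsilon$ with $a\le M_\varepsilon\le b$, solve the resulting smooth forward and backward problems for $u_\varepsilon$ and $v_\varepsilon$, perform all integrations by parts at the smooth level where they are justified, and then pass to the limit $\varepsilon\to 0$: because the constants in Lemma~\ref{lem:parabolic_estimate} depend only on $a$, $b$, $\Omega$, $p'$ and $T$, the estimates are uniform in $\varepsilon$, so the limit identity and hence the final bound survive.
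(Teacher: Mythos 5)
Your proof is correct and is essentially the argument the paper relies on: the paper defers Proposition~\ref{prop:duality} to Proposition~1.1 of \cite{CanDevFel13}, whose proof (and the paper's own proofs of the variants, Propositions~\ref{prop:duality_rhs} and \ref{prop:duality_eps}) is exactly your pairing of $u$ with the backward dual solution $v$ from Lemma~\ref{lem:parabolic_estimate}, the identity $\int_{\Omega_T} u f = -\int_{\Omega} u_0\, v(0,\cdot)$, and the bound on $\left\Vert v(0,\cdot)\right\Vert_{L^{p'}(\Omega)}$. One small caveat on your last paragraph: since the given $u$ is a weak solution for the original coefficient $M$, regularizing $M$ in the \emph{forward} equation produces a possibly different solution $u_\eps$, so the cleaner justification is to use $v$ (or a mollification of $v$) directly as a test function in the weak formulation of $u$, or, as the paper does in Section~\ref{sec:prop_Lp}, to apply the estimate to the smooth solutions of the truncated systems and pass to the limit by Fatou.
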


In the sequel we will need a generalized version of Proposition~\ref{prop:duality}, which is an adaptation of Theorem~3.1 in \cite{DevFelPieVov07} (where only the case $p=2$ is treated).
\begin{proposition}\label{prop:duality_rhs}
Let $\Omega$ be a smooth bounded subset of $\R^N$, $\mu_1,\mu_2\ge 0$, and
consider a function $M:=M(t,x): \Omega_T\to\R_+$ satisfying $a\leq M \leq b$ for some $a,b>0$. 
 For any $p\in]1,+\infty[$, if 
\begin{equation}
\label{eq:hyp_C}
\frac{b-a}{b+a}\,\constC_{\frac{a+b}{2},p'}<1,
\end{equation}
then, there exists a constant $C>0$ (depending on $\Omega, a,b,p, \mu_1,\mu_2,T$) such that for any $u_0\in L^p(\Omega)$, any function $u:\Omega_T\to \R_+$ satisfying (weakly)
\begin{equation}
\label{eq:duality_2}
\left\{
\begin{aligned}
& \partial_t u - \Delta_x\left(Mu\right) \leq \mu_1 u + \mu_2 \quad &\text{on}& \ \Omega_T,\\
& \nabla_x u\cdot \nu = 0 \quad &\text{on}& \ [0,T]\times\partial\Omega, \\
& u(0,\cdot) = u_0 \quad &\text{on}& \ \Omega,
\end{aligned}
\right.
\end{equation}
belongs  to $L^p(\Omega_T)$, with the estimate:
\begin{equation*}
\left\Vert u\right\Vert_{L^p(\Omega_T)} \le C \left(1+\left\Vert u_0\right\Vert_{L^p(\Omega)}\right).
\end{equation*}
\end{proposition}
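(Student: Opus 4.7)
The plan is to reduce Proposition~\ref{prop:duality_rhs} to the case $\mu_1=\mu_2=0$ (essentially Proposition~\ref{prop:duality}) via two standard devices: an exponential change of unknown that absorbs the linear term $\mu_1 u$, and an additional source contribution in the duality bracket that handles the constant $\mu_2$.

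First I would set $\tilde u(t,x) := e^{-\mu_1 t}\, u(t,x)$. A direct computation (using that $\mu_1$ is constant in $x$) shows that $\tilde u$ satisfies
\begin{equation*}
\partial_t \tilde u - \Delta_x(M\tilde u) \leq \mu_2\, e^{-\mu_1 t} \leq \mu_2 \qquad \text{on } \Omega_T,
\end{equation*}
with the same Neumann boundary condition and the same initial datum $u_0$. Since $\|u\|_{L^p(\Omega_T)} \leq e^{\mu_1 T}\, \|\tilde u\|_{L^p(\Omega_T)}$, it suffices to bound $\|\tilde u\|_{L^p(\Omega_T)}$. To do so by duality, I would pick an arbitrary nonnegative $g \in L^{p'}(\Omega_T)$ and let $v$ be the solution (existence as in Proposition~\ref{prop:exist_para_disc}) of the backward parabolic problem $\partial_t v + M \Delta_x v = -g$ on $\Omega_T$ with $v(T,\cdot) = 0$ and $\nabla_x v \cdot \nu = 0$. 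A maximum principle argument applied to $w(s,x) := v(T-s,x)$, which satisfies a forward heat-type equation with nonpositive source $-g$ and zero initial datum, shows that $v \geq 0$. Because hypothesis~\eqref{eq:hyp_C} holds, Lemma~\ref{lem:parabolic_estimate} then yields the two bounds
\begin{equation*}
\|v\|_{L^{p'}(\Omega_T)} \leq C\, \|g\|_{L^{p'}(\Omega_T)}, \qquad \|v(0,\cdot)\|_{L^{p'}(\Omega)} \leq C_0\, \|g\|_{L^{p'}(\Omega_T)}.
\end{equation*}

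Multiplying the inequality satisfied by $\tilde u$ by $v \geq 0$, integrating over $\Omega_T$, and performing two integrations by parts (one in time, exploiting $v(T,\cdot)=0$ and $\tilde u(0,\cdot)=u_0$, and one in space, exploiting $\nabla_x v\cdot\nu=0$ through the weak formulation of the equation for $\tilde u$), I would obtain
\begin{equation*}
\int_{\Omega_T} \tilde u\, g \;\leq\; \int_\Omega u_0\, v(0,\cdot)\,dx + \mu_2 \int_{\Omega_T} v\,dx\,dt.
\end{equation*}
H\"older's inequality combined with the two bounds on $v$ above yields $\int_{\Omega_T}\tilde u\,g \leq \bigl(C_0\,\|u_0\|_{L^p(\Omega)} + \mu_2\, C\, |\Omega_T|^{1/p}\bigr)\,\|g\|_{L^{p'}(\Omega_T)}$. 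Since $\tilde u\geq 0$, taking the supremum over nonnegative $g$ in the unit ball of $L^{p'}(\Omega_T)$ produces the desired $L^p$ bound for $\tilde u$, and reversing the change of unknown gives the announced estimate for $u$, with a constant depending only on $\Omega, a, b, p, \mu_1, \mu_2$ and $T$.

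The main technical obstacle will be to justify the integrations by parts rigorously: $u$ is only a weak subsolution and $M$ is merely $L^\infty$, so the dual solution $v$ delivered by Lemma~\ref{lem:parabolic_estimate} is not smooth enough to be plugged naively into the weak formulation of the equation for $u$. As in the proof of Proposition~\ref{prop:duality} in \cite{CanDevFel13} and in Proposition~\ref{prop:exist_para_disc} of the Appendix, I would handle this by first regularizing $M$ into a smooth $M_\eps$, carrying out the whole duality computation with the corresponding smooth solution $v_\eps$ of the approximate dual problem, and then passing to the limit $\eps\to 0$ using the uniform (in $\eps$) bounds coming from Lemma~\ref{lem:parabolic_estimate}.
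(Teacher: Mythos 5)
Your proof is correct and is essentially the paper's argument: the only difference is that you apply the factor $e^{-\mu_1 t}$ to $u$ so as to absorb the term $\mu_1 u$ before dualizing, whereas the paper keeps $u$ as is, includes $+\mu_1 v$ in the dual problem and rescales the dual solution by $e^{\mu_1 t}$ instead --- two equivalent bookkeeping choices that lead to the same inequality $\int_{\Omega_T} u\,\varphi \le \int_\Omega u_0\,v(0) + \mu_2\int_{\Omega_T} v$ and the same application of Lemma~\ref{lem:parabolic_estimate}. One small slip in your description: the time-reversed dual solution $w(s,\cdot)=v(T-s,\cdot)$ satisfies $\partial_s w - M\Delta_x w = +g(T-s,\cdot)\ge 0$ (a \emph{nonnegative} source, not a nonpositive one), and it is this sign that yields $v\ge 0$.
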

\begin{proof}
Let $\varphi$ be a nonnegative smooth function on $\Omega_T$
 and $v$ be the (unique, weak) solution (cf. Prop. \ref{prop:exist_para_disc}) of the dual problem
 
\begin{equation*}
\left\{
\begin{aligned}
& \partial_t v + M\Delta_x v + \mu_1 v  = -\varphi \quad &\text{on}& \ \Omega_T, \\
& \nabla_x v\cdot \nu = 0 \quad &\text{on}& \ [0,T]\times\partial\Omega, \\
& v(T,\cdot) = 0 \quad &\text{on}& \ \Omega.
\end{aligned}
\right.
\end{equation*}
Notice that the function $\tilde v$ defined by $\tilde v(t)=v(T-t)$ satisfies a standard, forward in time, reaction-diffusion equation 
\begin{equation*}
\left\{
\begin{aligned}
& \partial_t \tilde v - M\Delta_x \tilde v = \mu_1 \tilde v + \varphi \quad &\text{on}& \ \Omega_T,\\
& \nabla_x \tilde v\cdot \nu = 0 \quad &\text{on}& \ [0,T]\times\partial\Omega, \\
& \tilde v(0,\cdot) = 0 \quad &\text{on}& \ \Omega,
\end{aligned}
\right.
\end{equation*}
which ensure that $\tilde v$, and therefore $v$, are nonnegative. Multiplying \eqref{eq:duality_2} by the solution $v$ of the dual problem and integrating on $\Omega_T$, we end up with
\begin{equation}
\label{eq:dual_ineq}
\int\limits_{\Omega_T}u\,\varphi \leq \int\limits_{\Omega}u(0)v(0) + \mu_2\int\limits_{\Omega_T}v \leq \left\Vert u(0) \right\Vert_{L^{p}(\Omega)} \left\Vert v(0) \right\Vert_{L^{p'}(\Omega)} + \mu_2 \left(\vert\Omega\vert T\right)^{\frac{1}{p}} \left\Vert v \right\Vert_{L^{p'}(\Omega_T)}.
\end{equation}
Moreover, the rescaled function $w=e^{\mu_1 t}v$ satisfies
\begin{equation*}
\left\{
\begin{aligned}
& \partial_t w + M\Delta_x w = -e^{\mu_1 t}\varphi \quad &\text{on}& \ \Omega_T, \\
& \nabla_x w\cdot \nu = 0 \quad &\text{on}& \ [0,T]\times\partial\Omega, \\
& w(T, \cdot ) = 0 \quad &\text{on}& \ \Omega.
\end{aligned}
\right.
\end{equation*}
Thus, provided that hypothesis \eqref{eq:hyp_C} is satisfied, we can apply Lemma~\ref{lem:parabolic_estimate} to $w$ and get (after noticing that $\vert v\vert \leq \vert w\vert$)
\begin{equation*}
\left\Vert v\right\Vert_{L^{p'}(\Omega_T)} \le C \left\Vert \varphi\right\Vert_{L^{p'}(\Omega_T)}, \quad \text{and} \quad \left\Vert v(0)\right\Vert_{L^{p'}(\Omega)} \le C_0 \left\Vert \varphi\right\Vert_{L^{p'}(\Omega_T)},
\end{equation*}
where the term $e^{\mu_1 T}$ is absorbed in the constants. Returning to \eqref{eq:dual_ineq}, we finally obtain
\begin{equation*}
\int\limits_{\Omega_T}u\,\varphi \leq C \left(1+\left\Vert u_0\right\Vert_{L^p(\Omega)}\right)\left\Vert \varphi\right\Vert_{L^{p'}(\Omega_T)},
\end{equation*}
for all nonnegative smooth functions $\varphi$, and the statement of Proposition \ref{prop:duality_rhs} follows by duality.
\end{proof}


We finish this section with another variant of the duality lemma, in which $L^p$ r.h.s. can be treated.

\begin{proposition}
\label{prop:duality_eps}
Let $\Omega$ be a smooth bounded subset of $\R^N$  and
consider a function $M:= M(t,x) : \Omega_T\to\R_+$ satisfying $a\leq M \leq b$ for some $a,b>0$. Consider functions $A$ and $B$ defined on $\Omega_T$ and a real number $\varepsilon\in ]0,1[$. 
Assume that for some $p\in]1,+\infty[$, the following statements  hold: 
\begin{equation}
\label{eq:hyp_C3}
\frac{b-a}{b+a}\,\constC_{\frac{a+b}{2},p'}<1,\qquad A\in L^{\frac{p}{\varepsilon}}(\Omega_T),\quad
B\in L^p(\Omega_T).
\end{equation}
\par
 Then, there exists a constant $C$ (depending on $\Omega, a,b,p,\varepsilon, T$) 
such that for any $u_0\in L^p(\Omega)$, and any nonnegative $u\in L^p(\Omega_T)$ satisfying (weakly)
\begin{equation}
\label{eq:duality_eps}
\left\{
\begin{aligned}
& \partial_t u - \Delta_x (Mu) \leq A\,u^{1-\varepsilon}+B \quad &\text{on}& \ \Omega_T, \\
& \nabla_x u\cdot \nu = 0 \quad &\text{on}& \ [0,T]\times\partial\Omega, \\
& u(0,\cdot) = u_0 \quad &\text{on}& \ \Omega,
\end{aligned}
\right.
\end{equation}
the following estimate holds:
\begin{equation*}
\left\Vert u\right\Vert^p_{L^p(\Omega_T)} \le C \left(\left\Vert u_0\right\Vert^p_{L^p(\Omega)} + \left\Vert A\right\Vert^{\frac{p}{\varepsilon}}_{L^{\frac{p}{\varepsilon}}(\Omega_T)} + \left\Vert B\right\Vert^p_{L^p(\Omega_T)}\right).
\end{equation*}.
\end{proposition}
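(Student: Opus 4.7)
The plan is to follow the duality blueprint of Proposition \ref{prop:duality_rhs}: test $u$ against a nonnegative solution of the backward dual problem, invoke Lemma~\ref{lem:parabolic_estimate} (whose closeness assumption on $M$ is precisely \eqref{eq:hyp_C3}), and conclude by duality. The new feature compared to Proposition~\ref{prop:duality_rhs} is the source term $A\,u^{1-\varepsilon}+B$: since $A$ and $B$ are now functions rather than constants, they have to be handled by H\"older inequality against the dual function $v$, and the resulting power $\|u\|_{L^p(\Omega_T)}^{1-\varepsilon}$ has to be absorbed via Young's inequality, using crucially the a priori information $u\in L^p(\Omega_T)$ assumed in the statement.

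Concretely, given a nonnegative smooth test function $\varphi$ on $\Omega_T$, I let $v$ be the unique nonnegative weak solution (cf.\ Lemma~\ref{lem:parabolic_estimate} and Proposition~\ref{prop:exist_para_disc}) of the backward dual problem
\begin{equation*}
\partial_t v + M\,\Delta_x v = -\varphi \text{ on }\Omega_T,\quad \nabla_x v\cdot\nu=0 \text{ on } [0,T]\times\partial\Omega,\quad v(T,\cdot)=0 \text{ on } \Omega.
\end{equation*}
Multiplying \eqref{eq:duality_eps} by $v\ge 0$ and integrating by parts in $\Omega_T$ (all boundary terms vanish by the Neumann condition and $v(T,\cdot)=0$) produces
\begin{equation*}
\int_{\Omega_T} u\,\varphi \;\le\; \int_{\Omega} u_0\,v(0,\cdot) \;+\; \int_{\Omega_T} A\,u^{1-\varepsilon}\,v \;+\; \int_{\Omega_T} B\,v.
\end{equation*}

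For the middle integral I apply H\"older's inequality with the three exponents $p/\varepsilon$, $p/(1-\varepsilon)$ and $p'$, whose reciprocals sum to $\varepsilon/p+(1-\varepsilon)/p+1/p'=1$, obtaining
\begin{equation*}
\int_{\Omega_T} A\,u^{1-\varepsilon}\,v \;\le\; \|A\|_{L^{p/\varepsilon}(\Omega_T)}\,\|u\|_{L^p(\Omega_T)}^{1-\varepsilon}\,\|v\|_{L^{p'}(\Omega_T)}.
\end{equation*}
The other two integrals are treated by the classical two-factor H\"older inequality. Under hypothesis~\eqref{eq:hyp_C3}, Lemma~\ref{lem:parabolic_estimate} then supplies $\|v\|_{L^{p'}(\Omega_T)}\le C\|\varphi\|_{L^{p'}(\Omega_T)}$ and $\|v(0,\cdot)\|_{L^{p'}(\Omega)}\le C_0\|\varphi\|_{L^{p'}(\Omega_T)}$. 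Taking the supremum over nonnegative smooth $\varphi$ with $\|\varphi\|_{L^{p'}(\Omega_T)}\le 1$ gives, by duality,
\begin{equation*}
\|u\|_{L^p(\Omega_T)} \;\le\; C\left(\|u_0\|_{L^p(\Omega)} + \|B\|_{L^p(\Omega_T)} + \|A\|_{L^{p/\varepsilon}(\Omega_T)}\,\|u\|_{L^p(\Omega_T)}^{1-\varepsilon}\right).
\end{equation*}

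The final step is to absorb the power $\|u\|_{L^p(\Omega_T)}^{1-\varepsilon}$ by Young's inequality with conjugate exponents $1/\varepsilon$ and $1/(1-\varepsilon)$, which turns $C\,\|A\|_{L^{p/\varepsilon}(\Omega_T)}\,\|u\|_{L^p(\Omega_T)}^{1-\varepsilon}$ into $\tfrac{1}{2}\|u\|_{L^p(\Omega_T)} + C'\,\|A\|_{L^{p/\varepsilon}(\Omega_T)}^{1/\varepsilon}$. Here the a priori hypothesis $u\in L^p(\Omega_T)$ is essential, since it guarantees that the quantity moved to the left hand side is finite. Raising the resulting linear inequality to the $p$-th power and using $(x+y+z)^p\lesssim x^p+y^p+z^p$ yields the stated estimate. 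The only subtle point is this last absorption, for which the a priori $L^p$ bound is indispensable; the rest is a direct combination of duality, three-factor H\"older and the parabolic regularity furnished by Lemma~\ref{lem:parabolic_estimate}.
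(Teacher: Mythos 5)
Your proposal is correct and follows essentially the same route as the paper: the backward dual problem for $M$, Lemma~\ref{lem:parabolic_estimate} under hypothesis~\eqref{eq:hyp_C3}, and an absorption of the sublinear term that hinges on the a priori assumption $u\in L^p(\Omega_T)$. The only (cosmetic) difference is that the paper takes the dual datum to be $-u^{p-1}$ and absorbs each term with Young's inequality at the level of $\int_{\Omega_T}u^p$, whereas you dualize over general nonnegative $\varphi$ and absorb $\left\Vert u\right\Vert_{L^p(\Omega_T)}^{1-\varepsilon}$ afterwards; both variants are valid and yield the same estimate.
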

\begin{remark}
We stress the fact that Proposition \ref{prop:duality_eps} above requires {\it{a priori}} that the function $u$ lies in $L^p(\Omega_T)$. As a consequence, we shall not be able to directly apply this result to weak solutions of~\eqref{eq:syst_coag-frag}-\eqref{eq:coag}, but only to solutions of an approximate (truncated) system (such as (\ref{qtrunc})), for which we have {\it{a priori}} regularity estimates.
\end{remark}
\begin{proof}
We consider $v$ the solution (whose existence and uniqueness is once again given by Proposition~\ref{prop:exist_para_disc} of the Appendix) of the dual problem 
\begin{equation*}
\left\{
\begin{aligned}
& \partial_t v + M\Delta_x v = -u^{p-1} \quad &\text{on}& \ \Omega_T, \\
& \nabla_x v\cdot \nu = 0 \quad &\text{on}& \ [0,T]\times\partial\Omega, \\
& v(T,\cdot)=0 \quad &\text{on}& \ \Omega.
\end{aligned}
\right.
\end{equation*}
Again multiplying \eqref{eq:duality_eps} by $v$ and integrating on $\Omega_T$, we end up with 
\begin{align}
\label{eq:dual_ineq_2}
\int\limits_{\Omega_T}u^p  &\leq \int\limits_{\Omega}u(0)\,v(0) + \int\limits_{\Omega_T}Au^{1-\varepsilon}v + \int\limits_{\Omega_T}Bv.
\end{align}
Moreover thanks to \eqref{eq:hyp_C3}, we can apply Lemma~\ref{lem:parabolic_estimate} to the above dual problem and get that
\begin{equation*}
\left\Vert v\right\Vert_{L^{p'}(\Omega_T)} \le C \left\Vert u^{p-1}\right\Vert_{L^{p'}(\Omega_T)} \le C \left\Vert u\right\Vert^{p-1}_{L^{p}(\Omega_T)} \quad \text{and} \quad \left\Vert v(0)\right\Vert_{L^{p'}(\Omega)} \le C_0 \left\Vert u\right\Vert^{p-1}_{L^{p}(\Omega_T)}.
\end{equation*} 
Next, returning to \eqref{eq:dual_ineq_2}, we can bound each term of the r.h.s. using several times Young's inequality (sometimes using a parameter $\eta>0$): For the first term, we get
\begin{align*}
\int\limits_{\Omega}u(0)\,v(0) &\leq \frac{1}{p\eta^p}\int\limits_{\Omega}u(0 )^p + \frac{\eta^{p'}}{p'}\int\limits_{\Omega}v(0,\cdot)^{p'}
\leq \frac{1}{p\,\eta^p}\int\limits_{\Omega} u(0)^p + \frac{C_0^{p'}\,\eta^{p'}}{p'}\int\limits_{\Omega_T} u^{p},
\end{align*}
while  for the second one,
\begin{align*}
\int\limits_{\Omega_T}Au^{1-\varepsilon}v  &\leq \frac{1-\varepsilon}{p}\int\limits_{\Omega_T}u^p + \frac{\eta^{p'}}{p'}\int\limits_{\Omega_T}v^{p'} + \frac{\varepsilon}{p\eta^{\frac{p}{\varepsilon}}}\int\limits_{\Omega_T}A^{\frac{p}{\varepsilon}} \\
&\leq \biggl(\frac{1-\varepsilon}{p}+\frac{C^{p'}\,\eta^{p'}}{p'}\biggr)\int\limits_{\Omega_T}u^p + \frac{\varepsilon}{p\,\eta^{\frac{p}{\varepsilon}}}\int\limits_{\Omega_T}A^{\frac{p}{\varepsilon}},
\end{align*}
and finally for the last one,
\begin{align*}
\int\limits_{\Omega_T}Bv  &\leq \frac{1}{p\,\eta^p}\int\limits_{\Omega_T}B^p + \frac{\eta^{p'}}{p'}\int\limits_{\Omega_T}v^{p'} \leq \frac{1}{p\,\eta^p}\int\limits_{\Omega_T}B^p + \frac{C^{p'}\,\eta^{p'}}{p'}\int\limits_{\Omega_T}u^p.
\end{align*}
Putting everything together, we end up with
\begin{equation*}
\int\limits_{\Omega_T}u^p  \leq \left(\frac{1-\varepsilon}{p}+\frac{(2\,C^{p'}+C_0^{p'})\eta^{p'}}{p'}\right)\int\limits_{\Omega_T}u^p + \frac{1}{p\eta^p}\int\limits_{\Omega}u(0,\cdot)^p + \frac{\varepsilon}{p\eta^{\frac{p}{\varepsilon}}}\int\limits_{\Omega_T}A^{\frac{p}{\varepsilon}} + \frac{1}{p\eta^p}\int\limits_{\Omega_T}B^p,
\end{equation*}
and by taking $\eta>0$ small enough, we get the announced estimate.
\end{proof}

\section{Propagation of moments in $L^p$ norms}
\label{sec:prop_Lp}

This Section is devoted to the proof of propagation in $L^p(\Omega_T)$ ($p<+\infty$) 
of moments $\rho_k$.
We begin with 
Proposition~\ref{th:first_moment} and the propagation of the total mass $\rho_1$, when the \textit{closeness} hypothesis \eqref{hyp:closeness} on the diffusion coefficients is satisfied.
\medskip

\noindent \textit{Proof of Proposition~\ref{th:first_moment}.}
For $n\in\N^*$, we consider the solution $c^n=(c_1^n,\ldots,c^n_n)$ of~\eqref{systtrunc}-\eqref{qtrunc}, for which the existence and uniqueness of nonnegative, global and smooth solutions is classical (see for example Proposition 2.1 and Lemma 2.2 of \cite{Wrz97}, or \cite{DMilan}). Summing up the equations \eqref{systtrunc} for each $i$, we get
\begin{equation*}
\partial_t \left(\sum_{i=1}^{n} i c_i^n\right) - \Delta_x\left(\sum_{i=1}^{n} id_i c_i^n\right) =0,
\end{equation*}
which rewrites, when 
\begin{equation*}
\rho_1^n=\sum_{i=1}^n ic_i^n \quad \text{and}\quad  M_1^n:=\frac{\sum_{i=1}^{n} i d_ic_i^n}{\sum_{i=1}^{\infty} i c_i^n},
\end{equation*} as
\begin{equation*}
\partial_t \rho_1^n - \Delta_x\left(M_1^n\,\rho_1^n\right) =0.
\end{equation*}
Using
\begin{equation*}
a=\inf\limits_{i\geq 1} \{d_i\}  \quad \text{and} \quad b=\sup\limits_{i\geq 1} \{d_i\},
\end{equation*}
we get $a\leq M_1^n\leq b$ independently of $n$. Proposition~\ref{prop:duality} then yields
\begin{equation*}
\left\Vert\rho_1^n\right\Vert_{L^p(\Omega_T)} \leq C \left\Vert\rho_1^n(0,\cdot)\right\Vert_{L^p(\Omega)} \leq C \left\Vert\rho_1^{in}\right\Vert_{L^p(\Omega)},
\end{equation*}
where $C$ does not depend on $n$. By Proposition~\ref{prop:extraction} (or respectively \cite[Theorem 3]{LauMis02}),  we get a weak solution $c=(c_i)_{i\in\N*}$ of~\eqref{eq:syst_coag-frag}-\eqref{eq:coag} defined by (up to extraction)
\begin{equation*}
c_i=\lim\limits_{n\to\infty} c_i^n,
\end{equation*}
and thanks to Fatou's Lemma, we see that $\left\Vert\rho_1\right\Vert_{L^p(\Omega_T)} \leq C \left\Vert\rho_1^0\right\Vert_{L^p(\Omega)}.$ \hfill $\qed$

\begin{remark}
In fact Proposition~\ref{th:first_moment} would be valid for any weak solution to~\eqref{eq:syst_coag-frag}-\eqref{eq:coag} such that $\sum_{i=1}^{\infty}iQ_i(c) \in L^1(\Omega_T)$. Indeed, one can then prove that
\begin{equation}\label{eq:weak_mass_conserv}
\partial_t \left(\sum_{i=1}^{\infty} i c_i\right) - \Delta_x\left(\sum_{i=1}^{\infty} id_i c_i\right) =0
\end{equation}
holds weakly, and one can then directly apply Proposition~\ref{prop:duality} to~\eqref{eq:weak_mass_conserv}.
\end{remark}

\medskip

The proof of Proposition~\ref{th:all_moments} is a bit more involved but still based on the same idea.
 The outline of the proof is the following: First, we get
 $L^{\infty}(\Omega_T)$ bounds for each concentration $c_i$ and for any finite time $T$. Thus, it is sufficient to prove propagation in $L^p$ spaces for tail moments, in
 which we only consider concentrations $c_i$ for $i$ larger than some index $I$. Because we assumed that the $d_i$ 
 converge (when $i \to \infty$) towards a strictly positive real number, 
 the closeness hypothesis~\eqref{hyp:closeness} will always be satisfied for the coefficients $\left(d_i\right)_{i\geq I}$ when $I$ is large enough.
 This allows us to use a similar argument as in Proposition~\ref{th:first_moment} to prove the propagation in $L^p(\Omega_T)$ of the mass and then of all higher order moments.

\bigskip 

\noindent \textit{Proof of Proposition~\ref{th:all_moments}.}
As for Proposition~\ref{th:first_moment}, the rigorous way to prove Proposition~\ref{th:all_moments} is to get all the needed estimates on the solutions of the truncated problems~\eqref{systtrunc}-\eqref{qtrunc} and then pass to the limit (when $n \to \infty$). However for a clearer exposition of the different arguments, we first derive (sometimes formally) estimates on the whole system~\eqref{eq:syst_coag-frag}-\eqref{eq:coag} and then explain how to pass to the limit in the corresponding estimates on the truncated system. We begin with 
\begin{lemma}\label{lem:c_i}
Let $\Omega$ be a smooth bounded domain of $\R^N$. Assume that the coagulation coefficients satisfy~\eqref{hyp:exist_LM} and that $d_i>0$ for all $i\in \N^*$.
Assume also that each $c_i^{in}\ge 0$ lies in $L^{\infty}(\Omega)$. 
 We consider a global weak nonnegative solution of \eqref{eq:syst_coag-frag}-\eqref{eq:coag} (nonnegative meaning here that $c_i\ge 0$ for all $i \in \N^*$).
 
Then, the concentration $c_i$ 
 lies in $L^{\infty}(\Omega_T)$ for each integer $i\in\N^*$ and any positive time $T>0 $.
\end{lemma}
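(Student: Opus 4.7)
The plan is to argue by induction on $i$, exploiting the mild (Duhamel) formulation required by Definition~\ref{def:sol_faible} together with two elementary properties of the Neumann heat semigroup: positivity preservation, and the fact that $e^{d_iA_1 t}$ is a contraction on $L^\infty(\Omega)$. The key observation is that although the loss term $Q_i^-(c)$ only lies a priori in $L^1(\Omega_T)$, its pointwise nonnegativity combined with the positivity of the semigroup lets us simply discard it.

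\textbf{Base case.} For $i=1$, the gain term $Q_1^+(c)$ is an empty sum and hence vanishes. The Duhamel formula therefore gives
\begin{equation*}
c_1(t) = e^{d_1 A_1 t} c_1^{in} - \int_0^t e^{d_1 A_1 (t-s)} Q_1^-(c(s))\,ds.
\end{equation*}
Since $e^{d_1 A_1 (t-s)}Q_1^-(c(s))\ge 0$ and $c_1\ge 0$, one has $0\le c_1(t)\le e^{d_1 A_1 t} c_1^{in}$ pointwise a.e., whence $\Vert c_1\Vert_{L^\infty(\Omega_T)}\le \Vert c_1^{in}\Vert_{L^\infty(\Omega)}$ by the $L^\infty$-contractivity of the Neumann heat semigroup.

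\textbf{Inductive step.} Suppose $c_1,\ldots,c_{i-1}\in L^\infty(\Omega_T)$. Since $Q_i^+(c)=\tfrac12\sum_{j=1}^{i-1}a_{i-j,j}c_{i-j}c_j$ is a \emph{finite} sum involving only $c_1,\ldots,c_{i-1}$, it immediately belongs to $L^\infty(\Omega_T)$. Applying the same positivity argument as in the base case to the mild formulation for $c_i$ yields
\begin{equation*}
0\le c_i(t) \le e^{d_i A_1 t} c_i^{in} + \int_0^t e^{d_i A_1 (t-s)} Q_i^+(c(s))\,ds,
\end{equation*}
and the $L^\infty$-contractivity of $e^{d_i A_1 t}$ gives the pointwise estimate
\begin{equation*}
\Vert c_i(t)\Vert_{L^\infty(\Omega)} \le \Vert c_i^{in}\Vert_{L^\infty(\Omega)} + T\,\Vert Q_i^+(c)\Vert_{L^\infty(\Omega_T)},
\end{equation*}
which completes the induction.

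\textbf{Main difficulty.} The only delicate point is the justification of dropping the loss term $Q_i^-(c)$: a priori it is merely integrable, so one cannot manipulate it in $L^\infty$. The resolution is to work directly at the level of the Duhamel identity: since both $Q_i^-(c)$ and the kernel defining $e^{d_i A_1 (t-s)}$ are nonnegative, the integral involving $Q_i^-$ is a nonnegative function, and the inequality $c_i(t)\le c_i(t)+\int_0^t e^{d_i A_1 (t-s)}Q_i^-(c(s))\,ds$ is meaningful and pointwise a.e., as both sides are well-defined nonnegative measurable functions. The $L^\infty$-contractivity of the Neumann heat semigroup on $L^1\cap L^\infty$ (which follows from the nonnegativity and unit $L^1$-mass of the kernel) then produces the desired bound.
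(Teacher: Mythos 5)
Your proof is correct and follows essentially the same strategy as the paper: induction on $i$, noting that $Q_1^+(c)=0$, discarding the nonnegative loss term $Q_i^-(c)$, and bounding $c_i$ by the solution of a heat equation whose right-hand side $Q_i^+(c)$ is in $L^\infty(\Omega_T)$ by the inductive hypothesis. The paper phrases the comparison step as ``the maximum principle applied to $\partial_t c_i - d_i\Delta_x c_i \le Q_i^+(c)$,'' whereas you implement it directly on the mild (Duhamel) formulation of Definition~\ref{def:sol_faible} using positivity and $L^\infty$-contractivity of the Neumann heat semigroup --- a slightly more careful rendering of the same argument, well suited to the weak-solution setting.
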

\begin{proof}
 Since
\begin{equation*}
\partial_t c_1 - d_1 \Delta_x c_1 \leq Q_1^+(c) = 0,
\end{equation*}
the maximum principle for the heat equation yields
 that $c_1\in L^{\infty}(\Omega_T)$. Then, we observe that for all $i\geq 2$,
\begin{equation*}
\partial_t c_i - d_i \Delta_x c_i \leq  Q_i^+(c).
\end{equation*}
Since the coagulation gain term $\ds Q_i^+(c)=\frac{1}{2}\sum_{j=1}^{i-1}a_{i,j}c_{i-j}c_j$ involves only $c_j$ for $j<i$,
 we can conclude the statement of the lemma by induction.
\end{proof}

The proof of Lemma \ref{lem:c_i} shows sufficient conditions under which each $c_i$ is bounded on $\Omega_T$,
 but explicit bounds computed in this way would grow very fast with $i$. 
Thus, there is little hope of obtaining a result on $\rho_1$ by directly using this method. However, the knowledge that any finite truncation of $\rho_1$ lies in $L^{\infty}(\Omega_T)$ enables us to prove another result of propagation of $L^p$ norms for the mass $\rho_1$, where the assumption \eqref{hyp:closeness} is removed and replaced by the assumption of convergence of the diffusion coefficients $d_i$ towards a strictly positive limit.

\begin{lemma}
\label{th:first_moment_convergence}
Let $\Omega$ be a smooth bounded domain of $\R^N$. Assume that the coagulation coefficients satisfy~\eqref{sub}. Assume also that all $d_i$ are strictly positive, 
and that $(d_i)$ converges toward a strictly positive limit. Finally, assume that each $c_i^{in}\ge 0$ lies in $L^{\infty}(\Omega)$ and that $\rho_1^{in}\in L^p(\Omega)$ for some $p\in]1,+\infty[$.
We consider a global weak nonnegative solution of \eqref{eq:syst_coag-frag}-\eqref{eq:coag} (nonnegative meaning here that $c_i\ge 0$ for all $i \in \N^*$).

Then, 
 $\rho_1\in L^p(\Omega_T)$, for any finite time $T>0$.
\end{lemma}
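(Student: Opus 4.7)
The plan is to split the mass as
\begin{equation*}
\rho_1 \;=\; \underbrace{\sum_{i<I} i\, c_i}_{=:\,u_1^I} \;+\; \underbrace{\sum_{i\ge I} i\, c_i}_{=:\,u_2^I},
\end{equation*}
and control each piece by a different tool. The ``body'' $u_1^I$ is in $L^\infty(\Omega_T)$ thanks to Lemma~\ref{lem:c_i}. For the ``tail'' $u_2^I$, the crucial observation is that since $d_i\to d_\infty>0$, the cut-off $I$ can be chosen large enough so that $a_I:=\inf_{i\ge I}d_i$ and $b_I:=\sup_{i\ge I}d_i$ are arbitrarily close, and hence $\frac{b_I-a_I}{b_I+a_I}\,\constC_{(a_I+b_I)/2,\,p'}<1$. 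This fulfils the closeness hypothesis~\eqref{hyp:closeness_bis} and opens the door to the duality Proposition~\ref{prop:duality_rhs}.

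As in the proof of Proposition~\ref{th:first_moment}, I would perform all manipulations on the truncated system~\eqref{systtrunc}--\eqref{qtrunc}, where everything is smooth and finite, and pass to the limit only at the end via Fatou's lemma. Testing~\eqref{systtrunc} with the sequence $\varphi_i=i\,\mathbf{1}_{i\ge I}$, using~\eqref{wc_truncated}, and setting $M_1^{I,n}:=\sum_{i=I}^{n} i\,d_i\,c_i^n\big/u_2^{I,n}\in[a_I,b_I]$, yields
\begin{equation*}
\partial_t u_2^{I,n} - \Delta_x\bigl(M_1^{I,n}\, u_2^{I,n}\bigr) = \sum_{i\ge I} i\,Q_i^n(c^n).
\end{equation*}
Because $\varphi_{i+j}-\varphi_i-\varphi_j=0$ whenever both $i,j\ge I$ (intra-tail mass conservation), the right-hand side is a sum over pairs $(i,j)$ in which at least one index lies below $I$. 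These contributions involve factors $c_j^n$ with $j<I$, which are bounded in $L^\infty(\Omega_T)$ uniformly in $n$ (by the truncated analogue of Lemma~\ref{lem:c_i}, proved by induction on $j$ via the maximum principle).

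The key technical step is then to rewrite this source term as a bound of the form $\mu_1\,u_2^{I,n}+\mu_2$ suitable for Proposition~\ref{prop:duality_rhs}. Using the sublinearity~\eqref{sub}, namely $a_{i,j}\le C(i^\gamma+j^\gamma)$ with $\gamma<1$, together with $i^\gamma\le i$ for $i\ge 1$, the ``small--large'' contribution is controlled by
\begin{equation*}
\sum_{i\ge I,\,j<I} a_{i,j}\, c_i^n c_j^n\, j \;\leq\; C_I \sum_{i\ge I}(i^\gamma+1)\, c_i^n \;\leq\; C'_I\, u_2^{I,n},
\end{equation*}
while the ``small--small'' contribution (pairs with $i,j<I$ and $i+j\ge I$) lies in $L^\infty(\Omega_T)$ uniformly in $n$. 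Proposition~\ref{prop:duality_rhs} then delivers $\|u_2^{I,n}\|_{L^p(\Omega_T)}\le C\bigl(1+\|\rho_1^{in}\|_{L^p(\Omega)}\bigr)$ uniformly in $n$, and combining this with the uniform $L^\infty$-bound on $u_1^{I,n}$ and Fatou's lemma gives the claim. The main obstacle I anticipate is the bookkeeping required to extract a \emph{linear} (rather than quadratic) bound in $u_2^{I,n}$ from the coagulation gain: this is exactly where the strict sublinearity $\gamma<1$ is needed, since only then can $i^\gamma$ be absorbed into $i\,c_i^n$. A secondary technical point is to check that $\constC_{m,p'}$ behaves well as $m\to d_\infty$, so that the closeness condition can indeed be enforced by taking $I$ sufficiently large.
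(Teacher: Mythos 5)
Your proof is correct and follows essentially the same route as the paper: split off the tail moment $\rho_1^I=\sum_{i\ge I} i c_i$, use the convergence of $(d_i)$ to enforce the closeness condition for the tail, observe that intra-tail coagulation cancels in the weak formulation so the source term is linear in $\rho_1^I$ plus an $L^\infty$ remainder controlled by Lemma~\ref{lem:c_i}, and conclude with Proposition~\ref{prop:duality_rhs} (the paper applies this directly to the weak solution rather than to the truncated system, but notes later that the truncated computation is identical). One small correction to your closing comment: for this lemma only $a_{i,j}\le C\,i\,j$ is actually needed (the absorption $i^\gamma\le i$ works for any $\gamma\le 1$, as the paper remarks); the strict sublinearity $\gamma<1$ only becomes essential for the higher-order moments in Proposition~\ref{th:all_moments}.
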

\begin{proof}
We define
\begin{equation*}
a^I:=\inf\limits_{i\geq I} d_i, \qquad \text{and} \qquad b^I:=\sup\limits_{i\geq I} d_i.
\end{equation*}
Since $(d_i)$ converges toward a positive limit, there exist a positive integer $I$ for all $p'\in]1,+\infty[$ such that
\begin{equation}
\label{eq:hyp_I}
\frac{b^I-a^I}{b^I+a^I}\, \constC_{\frac{a^I+b^I}{2},p'}<1.
\end{equation}
We then consider 
\begin{equation*}
\rho_1^I:=\sum_{i=I}^{\infty} ic_i, \qquad\text{and}\qquad  M^I_1:=\frac{\sum_{i=I}^{\infty} i d_ic_i}{\sum_{i=I}^{\infty} i c_i}.
\end{equation*}
Note that thanks to Lemma~\ref{lem:c_i}, it is enough to prove that $\rho_1^I\in L^p(\Omega_T)$ in order
to conclude the proof of Lemma \ref{th:first_moment_convergence}.
We therefore compute (remember that $a_{i,j}= a_{j,i}$)
\begin{align*}
\partial_t \rho_1^I - \Delta_x \left(M_1^I\rho_1^I\right) &= \frac{1}{2} \sum_{i=1}^{\infty}\sum_{j=1}^{\infty}a_{i,j}c_ic_j\left((i+j)\mathds{1}_{i+j\geq I}-i\mathds{1}_{i\geq I}-j\mathds{1}_{j\geq I}\right)\\
&= \frac{1}{2} \sum_{i=1}^{\infty}\sum_{j=1}^{\infty}a_{i,j}c_ic_j\left(i\left(\mathds{1}_{i+j\geq I}-\mathds{1}_{i\geq I}\right)+j\left(\mathds{1}_{i+j\geq I}-\mathds{1}_{j\geq I}\right)\right)\\
&= \sum_{i=1}^{\infty}\sum_{j=1}^{\infty}a_{i,j}c_ic_ji\left(\mathds{1}_{i+j\geq I}-\mathds{1}_{i\geq I}\right).
\end{align*}
Next, by using assumption \eqref{sub}, and more precisely that $a_{i,j}\leq C\, (i^{\gamma}+j^{\gamma})\leq C\, (i+j)$, we have 
\begin{align*}
\sum_{i=1}^{\infty}\sum_{j=1}^{\infty}a_{i,j}c_ic_ji\left(\mathds{1}_{i+j\geq I}-\mathds{1}_{i\geq I}\right) 
&\le C\, \sum_{i=1}^{\infty}\sum_{j=1}^{\infty} (i+j)\,c_ic_j i\left(\mathds{1}_{i+j\geq I}-\mathds{1}_{i\geq I}\right)  \\
&\leq C\,\sum_{i=1}^{I-1}\sum_{j=1}^{\infty} i^{2} c_ic_j  + 
C\,\sum_{i=1}^{I-1}\sum_{j=1}^{\infty} ic_ijc_j  \\
&\le 2C \bigg( \sum_{i=1}^{I-1}i^2\, c_i \bigg)\, \bigg(\sum_{j=1}^{\infty}j\,c_j \bigg).
\end{align*}
Thus, we obtain 
\begin{align*}
\partial_t \rho_1^I - \Delta_x \left(M_1^I\rho_1^I\right)
&\leq \psi_1\rho_1^I + \psi_2,
\end{align*}
where $\ds \psi_1= 2C\sum_{i=1}^{I-1}i^2 \,c_i$, and $\ds \psi_2= \psi_1\sum_{i=1}^{I-1} j\,c_j $. Now thanks to Lemma~\ref{lem:c_i}, both $\psi_1$ and $\psi_2$ belong to $L^{\infty}(\Omega_T)$. Then, if we denote (for $i\in\{1,2\}$), $\mu_i := \left\Vert \psi_i \right\Vert_{L^{\infty}(\Omega_T)}$, we get
\begin{equation*}
\partial_t \rho_1^I - \Delta_x \left(M_1^I\rho_1^I\right) \leq  \mu_1 \rho_1^I + \mu_2,
\end{equation*}
and we can conclude using Proposition~\ref{prop:duality_rhs}.
\end{proof}

\begin{remark}
Note that aside from symmetry, the above proof only requires the estimate $a_{i,j}\leq C\,i\,j$, which is a much weaker restriction on the coagulation coefficients than the ``sublinear'' assumption~\eqref{sub}. However, ``strictly superlinear'' coagulation is known to produce gelation already in the spatially homogeneous case.
Nonetheless, it is known for the homogeneous case that adding sufficiently strong fragmentation in the model can prevent gelation even with ``superlinear'' coagulation. Similar results in presence of diffusion, together with generalisations of some results of this paper to  models including fragmentation will be discussed in \cite{Bre15}.
\end{remark}
\medskip

\noindent \textit{Continuation of the proof of Proposition~\ref{th:all_moments}.}
We shall now prove the propagation of $L^p$, ($p<+\infty$) regularity for moments of higher order. This is done still under the assumption that the diffusion coefficients $d_i$ converge towards a
strictly positive limit. 
\medskip

We first introduce
\begin{equation*}
M^I_k:=\frac{\sum_{i=I}^{\infty} i^k d_ic_i}{\sum_{i=I}^{\infty} i^k c_i}, \qquad a^I \le M^I_k \le b^I.
\end{equation*}

The proof of propagation for moments of higher order involves the {\sl{a priori}} estimate established in Proposition~\ref{prop:duality_eps}. Therefore the results of Proposition~\ref{th:all_moments} (for $k>1$) only apply to such solutions, which are constructed as a limit of solutions of a truncated system. 
For a clearer exposition of the proof, we first perform the computations formally and then show afterwards how to conclude rigorously through the use of the truncated systems~\eqref{systtrunc}-\eqref{qtrunc}.
\smallskip

 We proceed by induction and assume that (for some integer $k$), $\rho_l\in L^p(\Omega_T)$, for all $p<+\infty$ and every $l\leq k-1$. Note that 
 Lemma \ref{th:first_moment_convergence} ensures 
that the induction hypothesis holds for $k=2$.
For any $I\in\N^*$ (using \eqref{sub}), we have
\begin{align}
\label{eq:moments}
\partial_t \rho_{k}^I -\Delta_x(M^I_{k}\rho^I_{k}) &\leq \frac{1}{2} \sum_{i=1}^{\infty}\sum_{j=1}^{\infty}a_{i,j}c_ic_j\left((i+j)^{k}\mathds{1}_{i+j\geq I}-i^{k}\mathds{1}_{i\geq I}-j^{k}\mathds{1}_{j\geq I}\right) \nonumber\\
&\leq \frac{C}{2} \sum_{l=1}^{{k}-1}\binom{k}{l} \sum_{i=1}^{\infty}\sum_{j=1}^{\infty} (i^{\gamma}+j^{\gamma})\,i^lc_i\,j^{{k}-l}c_j \nonumber\\ 
&\quad +\frac{C}{2} \sum_{i=1}^{\infty}\sum_{j=1}^{\infty}(i^{\gamma}+j^{\gamma})c_ic_j(i^{k}(\mathds{1}_{i+j\geq I}-\mathds{1}_{i\geq I})+j^{k}(\mathds{1}_{i+j\geq I}-\mathds{1}_{j\geq I})) \nonumber\\
&\leq C \sum_{l=1}^{{k}-1}\binom{{k}}{l} \sum_{i=1}^{\infty}i^{\gamma+l}c_i\sum_{j=1}^{\infty} j^{k-l}c_j \nonumber\\ 
&\quad + C \sum_{i=1}^{I-1}i^{\gamma+k}c_i\sum_{j=1}^{\infty} c_j + C\sum_{i=1}^{I-1}i^{k}c_i\sum_{j=1}^{\infty}j^{\gamma}c_j . 
\end{align}
In the special case $k=2$, this yields (using $\gamma\leq 1$)
\begin{equation*}
\partial_t \rho_{2}^I -\Delta_x(M^I_{2}\rho^I_{2}) \leq 2C\rho_{1+\gamma}\rho_1 +2C\rho_1 \sum_{i=1}^{I-1}i^{3}c_i.
\end{equation*}
Moreover, we can split the moment of order $1+\gamma$ between a finite part that we already control and a tail, and then bound the latter using H\"older's inequality:
\begin{equation*}
\rho_{1+\gamma} \leq \sum_{i=1}^{I-1}i^{1+\gamma}c_i + \rho_{1+\gamma}^I \leq \sum_{i=1}^{I-1}i^{2}c_i +  \left(\rho_{2}^I\right)^{1-\varepsilon} \left(\rho_1^I\right)^{\varepsilon},\qquad
\text{where}\quad\varepsilon=1-\gamma>0.
\end{equation*}
Therefore, we end up with
\begin{equation}
\label{eq:moment_2}
\partial_t \rho_{2}^I -\Delta_x(M^I_{2}\rho^I_{2}) \leq 2C\left(\rho_{2}^I\right)^{1-\varepsilon} \left(\rho_1\right)^{1+\varepsilon} + 4C\rho_1 \sum_{i=1}^{I-1}i^{3}c_i,
\end{equation}
and the last term in the r.h.s. of \eqref{eq:moment_2} lies in $L^p(\Omega_T)$ for every $p<+\infty$ thanks to Lemma~\ref{lem:c_i} and Lemma~\ref{th:first_moment_convergence}. Thus, taking $I$ large enough, inequality \eqref{eq:moment_2} and Proposition~\ref{prop:duality_eps} formally yield a (computable) bound for $\rho_2^I$ (and therefore $\rho_2$) in $L^p(\Omega_T)$, for all $p<+\infty$. 
 
Next, returning to the general case of moments of order $k>2$, we 
 estimate \eqref{eq:moments} as
\begin{align*}
\partial_t \rho_{k}^I -\Delta_x(M^I_{k}\rho^I_{k}) &\leq kC \sum_{i=1}^{\infty}i^{\gamma+k-1}c_i\sum_{j=1}^{\infty} jc_j + C \sum_{l=1}^{{k}-2}\binom{{k}}{l}  \sum_{i=1}^{\infty}i^{\gamma+l}c_i\sum_{j=1}^{\infty} j^{k-l}c_j \nonumber\\ 
&\quad + C \sum_{i=1}^{I-1}i^{\gamma+k}c_i\sum_{j=1}^{\infty} c_j + C\sum_{i=1}^{I-1}i^{k}c_i\sum_{j=1}^{\infty}j^{\gamma}c_j  \nonumber \\
&\leq kC\rho_{k-1+\gamma}\rho_1 + C \sum_{l=1}^{{k}-2}\binom{{k}}{l} \rho_{l+1}\rho_{k-l} +2C\rho_1 \sum_{i=1}^{I-1}i^{k+1}c_i,
\end{align*}
where we used $\gamma\leq 1$. Again, we can split the moment of order $k-1+\gamma$ between a finite part that we already control and a tail, and then bound the latter using H\"older's inequality:
\begin{equation*}
\rho_{k-1+\gamma} \leq \sum_{i=1}^{I-1}i^{k-1+\gamma}c_i + \rho_{k-1+\gamma}^I \leq \sum_{i=1}^{I-1}i^{k}c_i +  \left(\rho_{k}^I\right)^{1-\varepsilon} \left(\rho_1^I\right)^{\varepsilon},\qquad
\text{where}\quad\varepsilon=\frac{1-\gamma}{k-1}>0.
\end{equation*}
Therefore, we end up with
\begin{equation}
\label{eq:moment_k2}
\partial_t \rho_{k}^I -\Delta_x(M^I_{k}\rho^I_{k}) \leq kC\left(\rho_{k}^I\right)^{1-\varepsilon} \left(\rho_1\right)^{1+\varepsilon} + C \sum_{l=1}^{{k}-2}\binom{{k}}{l} \rho_{l+1}\rho_{k-l} +(k+2)C\rho_1 \sum_{i=1}^{I-1}i^{k+1}c_i,
\end{equation}
and the last two terms in the r.h.s. of \eqref{eq:moment_k2} lie in $L^p(\Omega_T)$ for every $p<+\infty$ thanks to Lemma~\ref{lem:c_i}, Lemma~\ref{th:first_moment_convergence} and the induction hypothesis. Thus, taking $I$ large enough, inequality \eqref{eq:moment_k2} and Proposition~\ref{prop:duality_eps} would yield a (computable) bound for $\rho_k^I$ (and therefore $\rho_k$) in $L^p(\Omega_T)$, for all $p<+\infty$, except that Proposition~\ref{prop:duality_eps} 
also requires to {\sl{a priori}} know that $\rho_k^I \in L^p(\Omega_T)$, which is not the case at this point.
\medskip

 In order to make the proof rigorous, we need to apply Proposition~\ref{prop:duality_eps} to smooth solutions obtained by truncating the original system. Therefore, for an integer $n>I$, we consider $c^n=\left(c_i^n,\ldots,c_n^n\right)$ the solution of~\eqref{systtrunc}-\eqref{qtrunc} and define
\begin{equation*}
\rho_k^n = \sum_{i=1}^n i^kc_i^n, \quad \rho_k^{n,I} = \sum_{i=I}^{n}i^kc_i^n \quad \text{and}\quad M_k^{n,I}=\frac{\sum_{i=I}^{n} i^k d_ic_i^n}{\sum_{i=I}^{n} i^k c_i^n}.
\end{equation*}
We then perform the same computations as previously, taking into account the truncation
in the coagulation kernel (\ref{qtrunc}).
We get for the second order moment (and $n>I$)
\begin{align*}
\partial_t \rho_{2}^{n,I} -\Delta_x(M^{n,I}_{2}\rho^{n,I}_{2}) & \leq 2C\left(\rho_{2}^{n,I}\right)^{1-\varepsilon} \left(\rho_1^n\right)^{1+\varepsilon} + 4C\rho_1^n \sum_{i=1}^{I-1}i^{3}c_i^n\nonumber\\
 & = A_1^n\left(\rho^{n,I}_2\right)^{1-\varepsilon} + B_1^{n,I},
\end{align*}
where $\varepsilon=1-\gamma>0$ and $A_1^{n}$ and $B_1^{n,I}$ only depend on the approximating first order moment $\rho_1^{n}$ and on a finite number of approximate concentrations $c_i^n$, for $i<I$. Since this time we know that $\rho^{n,I}_2\in L^p(\Omega_T)$, we can apply Proposition~\ref{prop:duality_eps} and get the estimate  
\begin{align}
\label{eq:moment_2approx}
\int_{\Omega_T} \left(\rho^{n,I}_2\right)^p &\leq C \left(\int_{\Omega} \left(\rho^{n,I}_2(0)\right)^p + \int_{\Omega_T} \left(A_1^{n}\right)^\frac{p}{\varepsilon} + \int_{\Omega_T} \left(B_1^{n,I}\right)^p\right) \nonumber\\
&\leq C \left( \int_{\Omega} \left(\rho_2^{in}\right)^p + \int_{\Omega_T} \left(A_1^{n}\right)^\frac{p}{\varepsilon} + \int_{\Omega_T} \left(B_1^{n,I}\right)^p \right),
\end{align} 
where the constant $C$ does not depend on $n$. In order to complete the proof, we still
have to show that $A_1^{n}$ and $B_1^{n,I}$ can be bounded in $L^p$ norms uniformly-in-$n$.

Prior to that, we consider also the approximation of any general moments of order $k>2$. 
By estimating as in the computation leading to
\eqref{eq:moment_k2}, we obtain for the truncated moments of order $k>2$
\begin{align*}
\partial_t \rho_k^{n,I} - \Delta_x \left(M^{n,I}_k\rho_k^{n,I}\right) &\leq 
kC\left(\rho_{k}^{n,I}\right)^{1-\varepsilon} \left(\rho_1^n\right)^{1+\varepsilon} \nonumber\\
&\quad + C \sum_{l=1}^{{k}-2}\binom{{k}}{l} \rho_{l+1}^n\rho_{k-l}^n +(k+2)C\rho_1^n \sum_{i=1}^{I-1}i^{k+1}c_i^n \nonumber\\
&= A_{k-1}^{n}\left(\rho^{n,I}_k\right)^{1-\varepsilon} + B_{k-1}^{n,I},
\end{align*}
with $\varepsilon=\frac{1-\gamma}{k-1}>0$, and where $A_{k-1}^{n}$ and $B_{k-1}^{n,I}$ only depend on moments $\rho_l^{n}$ of integer order $l$ between $1$ and $k-1$ and on a finite number of concentrations $c_i^n$ (for $i<I$). Moreover, since $\rho^{n,I}_k\in L^p(\Omega_T)$, we can again apply Proposition~\ref{prop:duality_eps} and estimate 
\begin{align}
\label{eq:moment_kapprox}
\int_{\Omega_T} \left(\rho^{n,I}_k\right)^p 
&\leq C \left( \int_{\Omega} \left(\rho_k^{in}\right)^p + \int_{\Omega_T} \left(A_{k-1}^n\right)^\frac{p}{\varepsilon} + \int_{\Omega_T} \left(B_{k-1}^{n,I}\right)^p \right),
\end{align} 
where the constant $C$ does not depend on $n$.
 So we again have to prove  that $A_{k-1}^n$ and $B_{k-1}^{n,I}$ can be bounded in $L^p$ 
norms uniformly-in-$n$.

First we notice that for any given $i$, since $c_i^{in}\in L^{\infty}(\Omega)$, the concentrations $c_i^{n}$ can be bounded in $L^{\infty}(\Omega_T)$ uniformly-in-$n$ by the computations of Lemma~\ref{lem:c_i}. Indeed,
\begin{equation*}
\partial_t c_1^n - d_1 \Delta_x c_1^n \leq Q_1^{+,n}(c^n) = 0
\end{equation*}
yields a uniform-in-$n$ bound for $c_1^n$, and then
\begin{equation*}
\partial_t c_i^n - d_i \Delta_x c_i^n \leq  \frac{1}{2}\sum_{j=1}^{i-1}a_{i-j,j}c_{i-j}^nc_j^n
\end{equation*} 
allows to conclude inductively in $i$ for all $T>0$. 
Now, from that fact (that is, each $c_i^n$ is uniformly-in-$n$ bounded in $L^{\infty}(\Omega_T)$), we get that $\rho_1^{n,I}$ (and also $\rho_1^{n}$) is uniformly-in-$n$ bounded in any $L^p$ norm, $p<+\infty$, thanks to Lemma~\ref{th:first_moment_convergence} and Proposition~\ref{prop:duality_rhs} (one just needs to repeat the computations in the proof of Lemma~\ref{th:first_moment_convergence} with $\rho_1^{n,I}$ instead of $\rho_1^I$). Therefore $A_1^n$ and $B_1^{n,I}$ are uniformly-in-$n$ bounded in any $L^p(\Omega_T)$, $p<+\infty$, and going back to~\eqref{eq:moment_2approx}, this yields that $\rho^{n,I}_2$ (and thus $\rho_2^n$) is uniformly-in-$n$ bounded in any $L^p(\Omega_T)$, $p<+\infty$. Similarly, we can prove inductively using \eqref{eq:moment_kapprox} that for all $k>2$, $\rho^{n,I}_k$ (and thus $\rho_k^n$) is uniformly-in-$n$ bounded in any $L^p(\Omega_T)$, $p<+\infty$. Applying Fatou's Lemma we can conclude that, for the weak solutions given by Proposition~\ref{prop:extraction}, $\rho_k$ is bounded in any $L^p(\Omega_T)$, $p<+\infty$. \hfill $\qed$

\begin{remark}
Fatou's Lemma is enough here to show that $\rho_k\in L^p(\Omega_T)$, but since for any $k\in\N^*$ we know that $\rho_{k+1}^n$ is bounded in $L^p(\Omega_T)$ uniformly-in-$n$, we could show by interpolation that we do in fact have the convergence of $\rho_k^n$ to $\rho_k$ in $L^p(\Omega_T)$.
\end{remark}

\begin{remark}
Note that Proposition~\ref{th:all_moments} states that we have propagation of the moment $\rho_k$ in every $L^p(\Omega_T)$, $p<+\infty$, provided that the initial moment $\rho_k^{in}$ lies in every $L^p(\Omega)$, $p<+\infty$. If we only want to get propagation of $\rho_k$ in $L^p(\Omega_T)$ for some fixed $p$, we can relax a bit the hypothesis, but to apply the above proof we still need to assume that initial moment of lower order $\rho_l^{in}$, $l<k$, are in some space $L^q(\Omega)$ with $q>p$ depending of the magnitude of the coagulation. For instance, if we want to get for some fixed $p$ that $\rho_2\in L^p(\Omega_T)$ with the method of Proposition~\ref{th:all_moments}, we need to assume that $\rho_2^{in}\in L^p(\Omega)$ and $\rho_1^{in}\in L^q(\Omega)$, where $q=\frac{2-\gamma}{1-\gamma}p$.
\end{remark}

\begin{remark}
\label{remark:d_i=0}
Finally, we point out that Proposition~\ref{th:all_moments} would still hold if a finite number of diffusion coefficients $d_i$ were equal to $0$. Indeed Lemma~\ref{lem:c_i} is still valid in this case: if $c_j\in L^{\infty}(\Omega_T)$ for all $j<i$ and $d_i=0$, then
\begin{equation*}
\partial_t c_i \leq  Q_i^+(c)=\frac{1}{2}\sum_{j=1}^{i-1}a_{i,j}c_{i-j}c_j
\end{equation*}
shows that $c_i\in L^{\infty}(\Omega_T)$; and up to taking $I$ large enough, we would still get $\inf\limits_{i\geq I} d_i>0$, so we could still apply Proposition~\ref{prop:duality_rhs} and Proposition~\ref{prop:duality_eps} to control the tail moments $\rho_k^I$.
\end{remark}

\section{Propagation of Sobolev norms for moments}
\label{sec:max_reg}

In this Section, we show how the results of Proposition~\ref{th:all_moments} can be improved
in order to get higher regularity as stated in Theorem~\ref{th:all_moments_w}. 
We also explain how the obtained regularity in fact implies uniqueness.

\medskip 
\noindent\textit{Proof of Theorem~\ref{th:all_moments_w}.}
We consider a solution provided by Proposition~\ref{th:all_moments}, for which we already know
that we have propagation of moments in $L^p$ spaces. Remembering~\eqref{eq:syst_coag-frag}, we want to use the properties of the heat equation to get additional regularity, and to do so we first need to estimate the coagulation term. This is the content of the following lemma. 
\begin{lemma}\label{lem:ci_to_Qi}
Let $\Omega$ be a smooth bounded domain of $\R^N$ and $s\in\N$. Assume that $(c_i)_{i\in\N^*}$ is a sequence of positive functions defined on $\Omega_T$ such that
\begin{equation}
\label{eq:ikc_i}
\sup\limits_{i\geq 1} \left\Vert i^k c_i \right\Vert_{W^{s,p}(\Omega_T)} < +\infty,\quad \forall k\in\N,\ \forall p<+\infty.
\end{equation}
Then, assuming~\eqref{eq:coag} and \eqref{sub}, the following estimates hold:
\begin{equation}
\label{eq:ikQ_i}
\sup\limits_{i\geq 1} \left\Vert i^k Q_i(c) \right\Vert_{W^{s,p}(\Omega_T)} < +\infty,\quad \forall k\in\N,\ \forall p<+\infty.
\end{equation}
\end{lemma}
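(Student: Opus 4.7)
The plan is to split $Q_i = Q_i^+ - Q_i^-$ and to bound each piece in $W^{s,p}(\Omega_T)$ uniformly in $i$. Two basic tools will be used. First, the generalized Leibniz rule together with H\"older's inequality with $\frac{1}{p}=\frac{1}{2p}+\frac{1}{2p}$ yields, for any $s\in\N$ and $p\in]1,+\infty[$,
\begin{equation*}
\|fg\|_{W^{s,p}(\Omega_T)} \le C_s\, \|f\|_{W^{s,2p}(\Omega_T)}\,\|g\|_{W^{s,2p}(\Omega_T)}.
\end{equation*}
Second, a \emph{power-shifting} observation: because $\ell$ is a scalar, for any integers $m,r\ge 0$ and any $q<+\infty$,
\begin{equation*}
\|\ell^{m}c_\ell\|_{W^{s,q}(\Omega_T)} = \ell^{-r}\,\|\ell^{m+r}c_\ell\|_{W^{s,q}(\Omega_T)} \le \ell^{-r}\sup_{n\ge 1}\|n^{m+r}c_{n}\|_{W^{s,q}(\Omega_T)},
\end{equation*}
which by~\eqref{eq:ikc_i} is finite for every $(m+r,q)$. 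The crucial point is that \eqref{eq:ikc_i} is available for \emph{every} exponent and \emph{every} integrability, so we can freely replace $p$ by $2p$ and insert spare decay factors $\ell^{-r}$ whenever needed.

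For the loss term, using $a_{i,j}\le C(i^{\gamma}+j^{\gamma})$ and the identity $i^{k}Q_i^{-}(c) = \sum_{j\ge 1} a_{i,j}(i^{k}c_i)c_j$, the Leibniz-H\"older estimate gives
\begin{equation*}
\|i^{k}Q_i^{-}(c)\|_{W^{s,p}(\Omega_T)} \le C\,\bigl(\|i^{k+\gamma}c_i\|_{W^{s,2p}}+\|i^{k}c_i\|_{W^{s,2p}}\bigr)\sum_{j\ge 1}(1+j^{\gamma})\|c_j\|_{W^{s,2p}}.
\end{equation*}
The bracket on the right is bounded uniformly in $i$ since $k+\gamma<k+1\in\N$ and \eqref{eq:ikc_i} applies; the power-shifting trick with $r=2$ applied to each $c_j$ and the fact that $\gamma<1$ show that the $j$-series converges. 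For the gain term the summation $\sum_{j=1}^{i-1}$ itself depends on $i$, so we first redistribute the weight $i^{k}$ via $i^{k}=((i-j)+j)^{k}\le 2^{k}((i-j)^{k}+j^{k})$ combined with $a_{i-j,j}\le C((i-j)^{\gamma}+j^{\gamma})$. Each resulting summand takes the form $(i-j)^{a}j^{b}c_{i-j}c_j$ with nonnegative integers $a,b$ satisfying $a+b\le k+1$, and applying the Leibniz estimate followed by two applications of the power-shifting trick with $r=2$ yields
\begin{equation*}
\bigl\|(i-j)^{a}c_{i-j}\cdot j^{b}c_j\bigr\|_{W^{s,p}(\Omega_T)} \le C\,(i-j)^{-2}\,j^{-2},
\end{equation*}
a bound that invokes~\eqref{eq:ikc_i} at exponents up to $k+3$. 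The remaining numerical series $\sum_{j=1}^{i-1}(i-j)^{-2}j^{-2}$ is bounded uniformly in $i$ (splitting at $j=i/2$ in fact shows it is $O(1/i^{2})$), which delivers~\eqref{eq:ikQ_i} for the gain term as well.

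The only non-routine aspect is the joint bookkeeping of exponents and integrability: for each target $(s,p,k)$ in the conclusion one must decide how many spare powers of $\ell$ and how much extra integrability will be consumed when applying the product rule and the power-shifting trick. Because \eqref{eq:ikc_i} is simultaneously available for \emph{all} $k\in\N$ and \emph{all} $p<+\infty$, these demands can always be accommodated, and the lemma follows.
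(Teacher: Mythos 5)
Your proof is correct and follows essentially the same route as the paper: the algebra property $\left\Vert fg\right\Vert_{W^{s,p}}\leq C_s\left\Vert f\right\Vert_{W^{s,2p}}\left\Vert g\right\Vert_{W^{s,2p}}$, the insertion of spare powers of the index to invoke the hypothesis at higher $k$, and the reduction to a uniformly convergent numerical series split at $j=i/2$. The only (harmless) difference is bookkeeping: the paper bounds $a_{i-j,j}i^{k}\le Ci^{k+1}$ and controls $\sum_{j}i^{k+1}(i-j)^{-k-2}j^{-k-2}$, whereas you redistribute $i^{k}$ onto the two factors first and control $\sum_{j}(i-j)^{-2}j^{-2}$.
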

\begin{proof}
Remembering~\eqref{eq:coag} and using the sublinearity of the coagulation coefficients~\eqref{sub} ($a_{i,j}\leq C\, (i^{\gamma}+j^{\gamma})\leq C\, (i+j)\le 2C\,ij$), we estimate
\begin{align*}
\left\Vert i^kQ_i(c) \right\Vert_{W^{s,p}(\Omega_T)} \leq 
C\sum_{j=1}^{i-1} i^{k+1}\left\Vert c_{i-j} c_j \right\Vert_{W^{s,p}(\Omega_T)} + 2C\big\Vert i^{k+1} c_i\sum_{j=1}^{\infty} jc_j \big\Vert_{W^{s,p}(\Omega_T)}.
\end{align*}
We now use the algebra property of $\ds\bigcap_{p<+\infty} W^{s,p}(\Omega_T)$. More precisely, by combining Cauchy-Schwarz's inequality and Leibnitz's formula, the following estimate holds:
\begin{equation*}
\left\Vert uv \right\Vert_{W^{s,p}(\Omega_T)} \leq  C(s)\left\Vert u \right\Vert_{W^{s,2p}(\Omega_T)} \left\Vert v \right\Vert_{W^{s,2p}(\Omega_T)},
\end{equation*}
where $C(s)$ is a constant depending only on $s$. Therefore,
\begin{align*}
\left\Vert i^kQ_i(c) \right\Vert_{W^{s,p}(\Omega_T)} \leq 
C(s) \Biggl(\,&\sum_{j=1}^{i-1} i^{k+1}\left\Vert c_{i-j} \right\Vert_{W^{s,2p}(\Omega_T)} \left\Vert c_{j} \right\Vert_{W^{s,2p}(\Omega_T)} \\
& + \left\Vert i^{k+1}c_i\right\Vert_{W^{s,2p}(\Omega_T)} \sum_{j=1}^{\infty}\left\Vert j c_j \right\Vert_{W^{s,2p}(\Omega_T)} \biggr) \\
=  C(s) \Biggl(\,&\sum_{j=1}^{i-1} \frac{i^{k+1}}{(i-j)^{k+2}j^{k+2}}\left\Vert (i-j)^{k+2}c_{i-j} \right\Vert_{W^{s,2p}(\Omega_T)} \left\Vert j^{k+2}c_{j} \right\Vert_{W^{s,2p}(\Omega_T)} \\
&  + \left\Vert i^{k+1}c_i\right\Vert_{W^{s,2p}(\Omega_T)} \sum_{j=1}^{\infty}\frac{1}{j^2} \left\Vert j^{3} c_j \right\Vert_{W^{s,2p}(\Omega_T)}\Biggr).
\end{align*}
Using~\eqref{eq:ikc_i}, we get
\begin{equation*}
\left\Vert i^kQ_i(c) \right\Vert_{W^{s,p}(\Omega_T)} \leq C(s,p,k) \left(1+\sum_{j=1}^{i-1} \frac{i^{k+1}}{(i-j)^{k+2}j^{k+2}}\right),
\end{equation*}
where $C(s,p,k)$ depends on the quantities $\sup\limits_{j\geq 1} \left\Vert j^l c_j \right\Vert_{W^{s,2p}(\Omega_T)}$ for $l\in \N$,
but not on $i$. We then show that (for any $k\in\N$) 
\begin{equation*}
\sup_{i\ge 1}\, \ds \sum_{j=1}^{i-1}\frac{i^{k+1}}{(i-j)^{k+2}j^{k+2}} < \infty .
\end{equation*}
Indeed, by symmetry, we know that (denoting by $[m]$ the integer part of $m$)
\begin{align*}
 \sup_{i\ge 1}\, \sum_{j=1}^{i-1}\frac{i^{k+1}}{j^{k+2}(i-j)^{k+2}} &
 \leq 2 \sup_{i \ge 1} \sum_{j=1}^{\left[\frac{i}{2}\right]}\frac{i^{k+1}}{j^{k+2}(i-j)^{k+2}} 
 \leq 2 \sup_{i\ge 1} \sum_{j=1}^{\left[\frac{i}{2}\right]}\left(\frac{i}{i-\left[\frac{i}{2}\right]}\right)^{k+2}\frac{1}{j^{k+2}} \\
& \leq 2^{k+3} \sum_{j=1}^{\infty}\frac{1}{j^{k+2}} < \infty .
\end{align*}
This implies that
\begin{equation*}
\sup\limits_{i\geq 1} \left\Vert i^k Q_i(c) \right\Vert_{W^{s,p}(\Omega_T)} < +\infty,\quad \forall k\in\N,\ \forall p<+\infty,
\end{equation*}
and Lemma \ref{lem:ci_to_Qi} is proven.
\end{proof}
\par

\noindent \textit{Continuation of the proof of Theorem~\ref{th:all_moments_w}.}
Now, we can show that under the hypothesis of Theorem~\ref{th:all_moments_w}, the concentrations $(c_i)$ considered in Proposition~\ref{th:all_moments} satisfy
\begin{equation}
\label{eq:ikc_i_s}
\sup\limits_{i\geq 1} \left\Vert i^k c_i \right\Vert_{W^{s,p}(\Omega_T)} < +\infty,\quad \forall k\in\N,\ \forall p<+\infty,\ \forall s\in\N.
\end{equation}
We shall prove \eqref{eq:ikc_i_s} by induction on $s$. The case $s=0$ is a direct consequence of Proposition~\ref{th:all_moments}. Then, if for some $s\in\N$,
\begin{equation*}
\sup\limits_{i\geq 1} \left\Vert i^k c_i \right\Vert_{W^{s,p}(\Omega_T)} < +\infty,\quad \forall k\in\N,\ \forall p<+\infty,
\end{equation*}
we see that Lemma~\ref{lem:ci_to_Qi} yields the estimate
\begin{equation*}
\sup\limits_{i\geq 1} \left\Vert i^k Q_i(c) \right\Vert_{W^{s,p}(\Omega_T)} < +\infty,\quad \forall k\in\N,\ \forall p<+\infty.
\end{equation*}
Remembering that $i^k\,c_i$ satisfies
\begin{equation*}
\left\{
\begin{aligned}
& \left(\partial_t - d_i \Delta_x\right) i^kc_i = i^kQ_i(c) \quad &\text{on}& \ \Omega_T ,\\
& \nabla_x (i^k c_i)\cdot \nu = 0 \quad &\text{on}& \ [0,T]\times\partial\Omega, \\
& i^kc_i(0,\cdot) = i^kc_i^{in} \quad &\text{on}& \ \Omega,
\end{aligned}
\right.
\end{equation*}
and using the regularising properties of the heat equation (they can be used uniformly w.r.t. $i$ since the
diffusion rates $d_i$ are bounded above and below by strictly positive constants), we get the estimate
\begin{equation*}
\sup\limits_{i\geq 1} \left\Vert i^k c_i \right\Vert_{W^{s+1,p}(\Omega_T)} < +\infty,
\quad \forall k\in\N,\ \forall p<+\infty.
\end{equation*}
This concludes the proof of~\eqref{eq:ikc_i_s}. Notice that we also get $W^{s,p}$ estimates for polynomial moments of any order, because
\begin{equation*}
\left\Vert \rho_k \right\Vert_{W^{s,p}(\Omega_T)} \leq \sum_{i=1}^{\infty} \frac{1}{i^2} \left\Vert i^{k+2}c_i\right\Vert_{W^{s,p}(\Omega_T)} \leq \sup\limits_{i\geq 1} \left\Vert i^{k+2}c_i\right\Vert_{W^{s,p}(\Omega_T)}\sum_{i=1}^{\infty} \frac{1}{i^2}.
\end{equation*} 
The $\C^{\infty}$ regularity announced in Theorem~\ref{th:all_moments_w} is now a straightforward consequence of Sobolev embeddings (note that while $\Omega$ is assumed to be smooth, $\Omega_T=\Omega\times]0,T[$ will be only of Lipschitz regularity,
 but this is enough to apply the required Sobolev embeddings (see for instance \cite{Ada03})). The uniqueness of such a smooth solution is given by a
 straightforward extension of \cite[Theorem~1.4]{HamRez07} to the case
 of bounded smooth domains with Neumann boundary conditions (the uniqueness Theorem of \cite{HamRez07} is stated when the spatial domain is $\R^N$), where it is proven that there cannot exist more than one weak solution to~\eqref{eq:syst_coag-frag}-\eqref{eq:coag} satisfying $\rho_2\in L^{\infty}(\Omega_T)$, as soon as the coagulation coefficients satisfy $a_{i,j}\leq Cij$, which is implied by~\eqref{sub}. \hfill$\qed$


\section*{Acknowledgement}
The research leading to this paper was partially funded by the french ``ANR blanche'' project Kibord: ANR-13-BS01-0004. K.F. was partially supported by NAWI Graz and acknowledges the kind hospitality of the ENS Cachan.

\section*{Appendix}

This section is devoted to the proof of the existence of solutions of dual problems such as 
\begin{equation*}
\left\{
\begin{aligned}
& \partial_t v + M\Delta_x v = f \quad &\text{on}& \ \Omega_T, \\
& \nabla_x v\cdot \nu = 0 \quad &\text{on}& \ ]0,T[\times\partial\Omega, \\
&  v(T,\cdot) = 0 \quad &\text{on}& \ \Omega,
\end{aligned}
\right.
\end{equation*}
when $f$ lies in some $L^q(\Omega_T)$, provided that there exist
constants $0<a\le b$ (sufficiently close from one another in the sense of hypothesis~\eqref{hyp:closeness_bis}) such that $a\leq M\leq b$. We emphasise that $M := M(t,x)$ is not assumed to be continuous. 
\par
Note that with the change of time variable $\tau=T-t$, the above dual problem becomes
a forward heat equation with homogeneous initial data:
\begin{equation}\label{ne45}
\left\{
\begin{aligned}
& \partial_t v - M\Delta_x v = -f \quad &\text{on}& \ \Omega_T, \\
& \nabla_x v\cdot \nu = 0 \quad &\text{on}& \ ]0,T[\times\partial\Omega, \\
&  v(0,\cdot) = 0 \quad &\text{on}& \ \Omega.
\end{aligned}
\right.
\end{equation}

\noindent In the case of parabolic equations in divergence form, i.e. when
\begin{equation}
\label{eq:para_div}
\partial_t v - \diverg_x \left(A(t,x)\nabla_x v\right) = f,
\end{equation}
it is well known that the strict ellipticity property
\begin{equation*}
\xi\transp A(t,x)\, \xi \geq \lambda\, \vert \xi\vert^2,\quad \lambda >0, \quad\forall (t,x)\in \Omega_T,\quad \forall \xi\in\R^N,
\end{equation*}
guarantees the existence of weak solutions even if $A$ is only in $L^{\infty}$, see e.g. \cite{LadSolUra68}. 
\par
However, we are here interested in parabolic equations in non-divergence form, i.e.
\begin{equation}
\label{eq:para_non_div}
\partial_t v - \sum_{i,j} \,a_{i,j}(t,x)\, \partial^2_{i,j} v = f,
\end{equation}
 and we recall that when $A(t,x)=\left(a_{i,j}(t,x)\right)_{i,j}$ is not smooth, 
 the two formulations \eqref{eq:para_div} and \eqref{eq:para_non_div} are not equivalent. 
\par
Unfortunately, the existence theory for parabolic equations with discontinuous coefficients is much less developed in the non-divergence case \eqref{eq:para_non_div}
than in the divergence case (\ref{eq:para_div}), and some additional assumptions on $A$ (that is, stronger than strict ellipticity) are needed. 

One class of available results for parabolic equations in non-divergence form assumes coefficients which are VMO in at least sufficiently many of the time/space variables, see e.g. \cite{Kry,HHH} for references which consider $f\in L^q(\Omega_T)$ with $1<q<2$.

In \cite{Mau02}, equation~\eqref{eq:para_non_div} is treated as a perturbation of the standard heat equation
\begin{equation*}
\partial_t v - \Delta_x v = f ,
\end{equation*} 
and existence of a unique weak solution is then proven under the so called \textit{Cordes condition}
\begin{equation}
\label{hyp:cordes}
\left\Vert  \frac{\sum_{1\leq i,j \leq N}a_{i,j}^2+1}{\Bigl(\sum_{1\leq i \leq N}a_{i,i}+1\Bigr)^2}  
\right\Vert_{L^{\infty}(\Omega_T)} < \frac{1}{N}, 
\end{equation}
which in some sense measures how far $A$ is from the identity matrix and explicitly involves the dimension $N$.

The equation appearing in (\ref{ne45}) is less general than the one treated in \cite{Mau02},
 since we only consider matrices $A$ of the form
\begin{equation*}
a_{i,j}(t,x)=\delta_{i,j} M(t,x),
\end{equation*}
but unfortunately
 assumption~\eqref{hyp:cordes} may not be satisfied. We can however adapt the techniques of  \cite{Mau02}
to get a proof of existence (and uniqueness) under our assumptions on $M$.
 The main idea consists in considering $\partial_t - M\,\Delta_x$ as a perturbation of $\partial_t - m\,\Delta_x$ (instead of $\partial_t - \Delta_x$), where $m$ can be seen as a mean value of $M$. This is the content of the following:

\begin{proposition}\label{prop:exist_para_disc} 
Let $\Omega$ be a bounded smooth subset of $\R^N$ and
consider $M:\Omega_T\to\R_+$ satisfying $a\leq M \leq b$ for some $a,b>0$, and $f\in L^q(\Omega_T)$. Assume that the closeness condition \eqref{hyp:closeness_bis} holds.

Then, there exists a unique $u\in L^q(]0,T[;  W^{2,q} (\Omega) ) \cap W^{1,q} (]0,T[; L^q(\Omega))$ such that
\begin{equation}
\label{eq:syst_para_disc}
\left\{
\begin{aligned}
& \partial_t u - M\Delta_x u = f \quad &\text{on}& \ \Omega_T, \\
& \nabla_x u\cdot \nu = 0 \quad &\text{on}& \ ]0,T[\times\partial\Omega, \\
&  u(0,\cdot) = 0 \quad &\text{on}& \ \Omega.
\end{aligned}
\right.
\end{equation}
\end{proposition}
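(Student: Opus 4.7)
The plan is to treat the variable-coefficient operator $\partial_t - M\,\Delta_x$ as a perturbation of the constant-coefficient one $\partial_t - m\,\Delta_x$, where $m := (a+b)/2$ is the midpoint of the range of $M$. Rewriting \eqref{eq:syst_para_disc} as
\begin{equation*}
\partial_t u - m\,\Delta_x u \,=\, (M-m)\,\Delta_x u + f,
\end{equation*}
I first introduce the solution operator $S$ which, to a source $g\in L^q(\Omega_T)$, associates the unique $v=S(g)\in L^q(]0,T[;W^{2,q}(\Omega))\cap W^{1,q}(]0,T[;L^q(\Omega))$ with zero initial datum and homogeneous Neumann boundary condition, solving $\partial_t v - m\,\Delta_x v = g$. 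The existence, uniqueness and maximal-regularity bound $m\,\|\Delta_x v\|_{L^q(\Omega_T)}\le \constC_{m,q}\,\|g\|_{L^q(\Omega_T)}$ for this constant-coefficient Neumann heat problem are exactly the content of \cite{Lam87} (and underly Definition~\ref{def:C_mq}); the fact that $v$ actually lives in $W^{2,q}$ in space (not just that $\Delta_x v\in L^q$) follows from standard elliptic regularity for the Neumann Laplacian on the smooth domain $\Omega$.

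Next, I define the auxiliary operator $L:L^q(\Omega_T)\to L^q(\Omega_T)$ by $L(g):=(M-m)\,\Delta_x S(g)$. Since $|M-m|\le (b-a)/2$ pointwise, the maximal-regularity estimate yields
\begin{equation*}
\|L(g)\|_{L^q(\Omega_T)} \,\le\, \frac{b-a}{2}\cdot\frac{\constC_{m,q}}{m}\,\|g\|_{L^q(\Omega_T)} \,=\, \frac{b-a}{b+a}\,\constC_{\frac{a+b}{2},q}\,\|g\|_{L^q(\Omega_T)},
\end{equation*}
which, thanks to the closeness assumption \eqref{hyp:closeness_bis}, is a strict contraction. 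Hence $I-L$ is invertible on $L^q(\Omega_T)$ by a Neumann series, and I set $u:=S\bigl((I-L)^{-1}f\bigr)$. By construction $u$ lies in the required space and satisfies the boundary/initial conditions; writing $g=(I-L)^{-1}f$, the identity $g-(M-m)\Delta_x S(g)=f$ rearranges to $\partial_t u - M\,\Delta_x u = f$ a.e. on $\Omega_T$.

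For uniqueness, suppose $u$ solves \eqref{eq:syst_para_disc} with $f=0$ in the stated regularity class; set $g:=(M-m)\Delta_x u\in L^q(\Omega_T)$. The equation then reads $\partial_t u - m\,\Delta_x u = g$ with zero initial data and Neumann boundary condition, so $u=S(g)$; substituting back gives $g=(M-m)\Delta_x S(g)=L(g)$, so $(I-L)g=0$ and the invertibility of $I-L$ forces $g=0$, hence $u=0$. The main obstacle I foresee is not in the contraction mechanism itself (which is essentially the argument of \cite{Mau02} specialised to the scalar case $A=M\,I$, so that the Cordes-type condition reduces to \eqref{hyp:closeness_bis}) but rather in carefully invoking the Lamberton maximal-regularity result \cite{Lam87} with Neumann boundary condition to guarantee that $S$ genuinely produces a solution in $L^q(]0,T[;W^{2,q}(\Omega))\cap W^{1,q}(]0,T[;L^q(\Omega))$ and not merely in the weaker space controlled by the constant $\constC_{m,q}$; once this is granted, the argument closes in a purely functional-analytic manner.
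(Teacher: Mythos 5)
Your proposal is correct and follows essentially the same strategy as the paper: both treat $\partial_t - M\Delta_x$ as a perturbation of $\partial_t - m\Delta_x$ with $m=(a+b)/2$ and close the argument using the very same contraction constant $\frac{b-a}{b+a}\,\constC_{\frac{a+b}{2},q}<1$. The only (immaterial) difference is that you set up the Neumann series for $I-L$ on the source space $L^q(\Omega_T)$, whereas the paper runs the Banach fixed point directly on the solution space $Z^q$ equipped with the norm $\left(\int_{\Omega_T}|\partial_t v|^q+m^q\int_{\Omega_T}|\Delta_x v|^q\right)^{1/q}$.
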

\begin{proof}
We first rewrite the equation as a perturbation of a heat equation with constant diffusion coefficient:
\begin{equation*}
\partial_t u - m\Delta_x u = -(m-M)\Delta_x u + f,
\end{equation*} 
where $m=\frac{a+b}{2}$. Then we introduce the space 
\begin{equation*}
Z^q := \left\{ v \in L^q(]0,T[;  W^{2,q} (\Omega) ) \cap W^{1,q} (]0,T[; L^q(\Omega)),\ \nabla_x v\cdot \nu = 0 ,\ v(0,\cdot) = 0 \right\},
\end{equation*}
and the operator $F$ defined on $Z^q$, which associates
 to each $v\in Z^q$ the unique solution $Fv\in Z^q$ of
\begin{equation*}
\left\{
\begin{aligned}
& \partial_t (Fv) - m\,\Delta_x (Fv) = -(m-M)\,\Delta_x v + f \quad &\text{on}& \ \Omega_T, \\
& \nabla_x (Fv)\cdot \nu = 0 \quad &\text{on}& \ [0,T]\times\partial\Omega, \\
&  (Fv)(0,\cdot) = 0 \quad &\text{on}& \ \Omega.
\end{aligned}
\right.
\end{equation*}
Proving Proposition~\ref{prop:exist_para_disc} is now equivalent to showing
 the existence of a unique fixed point for $F$. We  endow $Z^q$ with the norm
\begin{equation*}
\left\Vert v\right\Vert_{Z^q_m(\Omega_T)} = \left( \int_{\Omega_T} \left\vert \partial_t v \right\vert^q +m^q \int_{\Omega_T} \left\vert \Delta_x v \right\vert^q \right)^{\frac{1}{q}},
\end{equation*} 
which makes $Z^q$ a Banach space. Note that this is indeed a norm on $Z^q$ thanks to Calderon-Zygmund inequality:
\begin{equation*}
\int\limits_{\Omega_T} \left\vert D^2_x v \right\vert^q \leq C \int\limits_{\Omega_T} \left\vert \Delta_x v \right\vert^q.
\end{equation*}
We now show that $F$ is a contraction on $\left(Z^q,\left\Vert \cdot \right\Vert_{Z^q_m}\right)$, which will yield the existence of a unique fixed point by the contraction mapping Theorem. For any $v,w\in Z^q$, we have
\begin{equation*}
\partial_t (Fv-Fw) - m\Delta_x (Fv-Fw) = -(m-M)\Delta_x (v-w),
\end{equation*}
so that remembering Definition~\ref{def:C_mq},
\begin{align*}
\left\Vert Fv-Fw\right\Vert_{Z^q_m(\Omega_T)} & \leq \frac{b-a}{2} \,\constC_{m,q} \left\Vert \Delta_x(v-w)\right\Vert_{L^q(\Omega_T)} 
 \leq \frac{b-a}{2}\, \frac{\constC_{m,q}}{m} \left\Vert v-w \right\Vert_{Z^q_m(\Omega_T)}. 
\end{align*}
Thus, thanks to $m=\frac{a+b}{2}$ and assumption~\eqref{hyp:closeness_bis}, $F$ is a contraction.
\end{proof}

\noindent Note that in the Hilbert space case $q=2$, it is easily possible (see e.g. \cite{Pie10}) to obtain an explicit bound on $\constC_{m,2}$, namely $\constC_{m,2}\leq 1$, which shows that assumption~\eqref{hyp:closeness_bis} is always satisfied for $q=2$ (and hence for $q$ sufficiently close to $2$, see Remark \ref{dualL2eps}). This is the content of the following

\begin{lemma}
\label{lem:q=2}
For all $m>0$, we have $\constC_{m,2}\leq 1$, see e.g. \cite{Pie10}.
\end{lemma}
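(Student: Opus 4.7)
The plan is to obtain the estimate by a direct energy computation based on squaring the PDE pointwise. Starting from $\partial_t v - m\Delta_x v = f$, squaring gives
\begin{equation*}
|f|^2 = |\partial_t v|^2 - 2m\,(\partial_t v)(\Delta_x v) + m^2 |\Delta_x v|^2.
\end{equation*}
Integrating this identity over $\Omega_T$, the extremal terms already produce the left-hand side of the desired inequality, so the task reduces to showing that the cross term has a favourable sign.

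For the cross term, I would integrate by parts in space at each fixed time:
\begin{equation*}
\int_\Omega (\partial_t v)(\Delta_x v)\,dx = -\int_\Omega \nabla_x(\partial_t v)\cdot\nabla_x v\,dx + \int_{\partial\Omega}(\partial_t v)(\nabla_x v\cdot\nu)\,d\sigma.
\end{equation*}
The boundary integral vanishes by the homogeneous Neumann condition on $v$ (which implies the same condition on $\partial_t v$, at least formally or after the standard smoothing/approximation argument). Recognising $\nabla_x(\partial_t v)\cdot\nabla_x v = \tfrac{1}{2}\partial_t |\nabla_x v|^2$ and integrating in time against the initial condition $v(0,\cdot)=0$ yields
\begin{equation*}
\int_{\Omega_T}(\partial_t v)(\Delta_x v)\,dx\,dt = -\frac{1}{2}\int_\Omega |\nabla_x v(T,x)|^2\,dx \le 0.
\end{equation*}

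Plugging this back and dropping the nonnegative boundary-in-time term $m\int_\Omega |\nabla_x v(T,x)|^2\,dx$ gives exactly
\begin{equation*}
\int_{\Omega_T}|\partial_t v|^2 + m^2\int_{\Omega_T}|\Delta_x v|^2 \le \int_{\Omega_T}|f|^2,
\end{equation*}
which is the bound $\constC_{m,2}\le 1$. The only mildly delicate point is justifying the integration by parts rigorously when $f$ is merely $L^2$: a standard density/approximation argument (approximate $f$ by smooth data, obtain a classical solution for which the computation is fully justified, then pass to the limit using the $L^2$ continuity of the linear map $f\mapsto v$) closes this gap. No high-powered parabolic regularity theory is needed beyond what defines the problem in Definition~\ref{def:C_mq}.
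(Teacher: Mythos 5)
Your argument is essentially identical to the paper's: squaring the equation is the same as multiplying once by $\partial_t v$ and once by $-m\Delta_x v$ and adding, and the treatment of the cross term (integration by parts, Neumann condition, zero initial data, sign of $-\frac12\int_\Omega|\nabla_x v(T)|^2$) matches the paper's proof exactly. The proof is correct.
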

\begin{proof}
By multiplying 
\begin{equation*}
\partial_t v - m\Delta_x v = f
\end{equation*}
once by $\partial_t v$, once by $-m\Delta_x v$, and adding the results, we get
\begin{equation*}
\left(\partial_t v\right)^2 + m^2\left(\Delta_x v\right)^2 -2m\,\partial_t v \Delta_x v = f^2.
\end{equation*}
We now show that $ \int_{\Omega_T} \partial_t v\, \Delta_x v \leq 0$. Integrating by parts 
and using the Neumann boundary conditions and the homogeneous initial data, we see indeed that
\begin{align*}
\int_{\Omega_T} \partial_t v \, \Delta_x v = - \int_0^T\!\! \int_{\Omega} \partial_t \nabla_x v\cdot
 \nabla_x v = -\frac{1}{2} \int_{\Omega} \left\vert \nabla_x v \right\vert^2(T) \leq 0.
\end{align*}
Therefore, we obtain 
$$ 
\int_{\Omega_T} (\partial_t v)^2 + m^2\, \int_{\Omega_T} (\Delta_x v)^2 \le \int_{\Omega_T} f^2,
$$
so that $\constC_{m,2}\leq 1$.
\end{proof}

\end{document}